\documentclass[12pt]{amsart}
\tolerance=500
\setlength{\emergencystretch}{3em}
\usepackage[T1]{fontenc}
\usepackage{lmodern}
\usepackage{ifthen}
\usepackage{amsfonts}
\usepackage{amsxtra}
\usepackage{amssymb}
\usepackage{amsthm}
\usepackage{array}
\usepackage[margin=1in]{geometry}
\usepackage{xcolor}
\definecolor{cite}{rgb}{0.50,0.00,1.00}
\definecolor{url}{rgb}{0.00,0.50,0.75}
\definecolor{link}{rgb}{0.00,0.00,0.50}
\usepackage[colorlinks,linkcolor=link,urlcolor=url,citecolor=cite,pagebackref,breaklinks]{hyperref}
\usepackage{mathtools}
\usepackage{mathrsfs}
\usepackage[all]{xy}
\usepackage[lite,abbrev,msc-links]{amsrefs}
\begin{document}
\title[DR-indecomposable special fibers of semi-stable reductions]{Some examples of DR-indecomposable special fibers of semi-stable reductions over Witt rings}
\author[Mao Sheng]{Mao Sheng}
\email{msheng@ustc.edu.cn}
\address{School of Mathematical Sciences,
	University of Science and Technology of China, Hefei, 230026, China}
\author[Junchao Shentu]{Junchao Shentu}
\email{stjc@ustc.edu.cn}
\address{School of Mathematical Sciences,
	University of Science and Technology of China, Hefei, 230026, China}


\theoremstyle{definition}
\newtheorem{Unity}{Unity}[section]
\newtheorem*{defn*}{Definition}
\newtheorem{defn}[Unity]{Definition}
\newtheorem{Notation}[Unity]{Notation}
\newtheorem{Claim}[Unity]{Claim}
\newtheorem*{Setting}{Setting}

\theoremstyle{plain}
\newtheorem*{thm*}{Theorem}
\newtheorem{thm}[Unity]{Theorem}
\newtheorem{prop}[Unity]{Proposition}
\newtheorem*{prop*}{Proposition}
\newtheorem*{cor*}{Corollary}
\newtheorem{cor}[Unity]{Corollary}
\newtheorem{lem}[Unity]{Lemma}
\newtheorem{conj}[Unity]{Conjecture}
\newtheorem{prob}[Unity]{Problem}
\newtheorem{question}[Unity]{Question}
\theoremstyle{remark}
\newtheorem*{rmk*}{Remark}
\newtheorem{rmk}[Unity]{Remark}
\newtheorem{exmp}[Unity]{Example}

\numberwithin{Unity}{section}

\newcommand{\sA}{\mathscr{A}}
\newcommand{\sB}{\mathscr{B}}
\newcommand{\sC}{\mathscr{C}}
\newcommand{\sD}{\mathscr{D}}
\newcommand{\sE}{\mathscr{E}}
\newcommand{\sF}{\mathscr{F}}
\newcommand{\sG}{\mathscr{G}}
\newcommand{\sH}{\mathscr{H}}
\newcommand{\sI}{\mathscr{I}}
\newcommand{\sJ}{\mathscr{J}}
\newcommand{\sK}{\mathscr{K}}
\newcommand{\sL}{\mathscr{L}}
\newcommand{\sM}{\mathscr{M}}
\newcommand{\sN}{\mathscr{N}}
\newcommand{\sO}{\mathscr{O}}
\newcommand{\sP}{\mathscr{P}}
\newcommand{\sQ}{\mathscr{Q}}
\newcommand{\sR}{\mathscr{R}}
\newcommand{\sS}{\mathscr{S}}
\newcommand{\sT}{\mathscr{T}}
\newcommand{\sU}{\mathscr{U}}
\newcommand{\sV}{\mathscr{V}}
\newcommand{\sW}{\mathscr{W}}
\newcommand{\sX}{\mathscr{X}}
\newcommand{\sY}{\mathscr{Y}}
\newcommand{\sZ}{\mathscr{Z}}
\newcommand{\spec}{\textrm{Spec}}
\newcommand{\dlog}{\textrm{dlog}}

\newcommand{\A}{{\mathbb A}}
\newcommand{\B}{{\mathbb B}}
\newcommand{\C}{{\mathbb C}}
\newcommand{\D}{{\mathbb D}}
\newcommand{\E}{{\mathbb E}}
\newcommand{\F}{{\mathbb F}}
\newcommand{\G}{{\mathbb G}}
\renewcommand{\H}{{\mathbb H}}
\newcommand{\I}{{\mathbb I}}
\newcommand{\J}{{\mathbb J}}
\renewcommand{\L}{{\mathbb L}}
\newcommand{\M}{{\mathbb M}}
\newcommand{\N}{{\mathbb N}}
\renewcommand{\P}{{\mathbb P}}
\newcommand{\Q}{{\mathbb Q}}
\newcommand{\Qbar}{\overline{\Q}}
\newcommand{\R}{{\mathbb R}}
\newcommand{\SSS}{{\mathbb S}}
\newcommand{\T}{{\mathbb T}}
\newcommand{\U}{{\mathbb U}}
\newcommand{\V}{{\mathbb V}}
\newcommand{\W}{{\mathbb W}}
\newcommand{\Z}{{\mathbb Z}}

\thanks{This work was partially supported by National Natural Science Foundation of China (Grant No. 11622109, No. 11626253) and the Fundamental Research Funds for the Central Universities.}

\maketitle

\begin{abstract}
	We answer negatively an open problem of Illusie on the DR-decomposability of the log de Rham complex of the special fiber of a semi-stable reduction over the Witt ring. We also show that $E_1$ degeneration of the Hodge to log de Rham spectral sequence does not imply DR-decomposability of semi-stable varieties. 
\end{abstract}

\section{Introduction}
The work of Deligne-Illusie \cite{Del_Ill1987} is fundamental in Hodge theory since it gives a new method to establish the $E_1$-degeneration property of the Hodge to de Rham spectral sequence. Let $k$ be a perfect field of positive characteristic and $X_0$ an algebraic variety over $k$. We have the following commutative diagram of Frobenius
$$\xymatrix{
  X_0 \ar[r]^{F=F_{X_0/k}}\ar[rd] & X_0'\ar[r]^{\pi} \ar[d] &X_0 \ar[d]\\
&   {\rm Spec}\ k \ar[r]^{\sigma}&{\rm Spec}\ k,\\
}$$
The variety $X_0$ is said to be \emph{DR-decomposable} if the complex $\tau_{<p}F_{\ast}\Omega^{\bullet}_{X_0}$ is quasi-isomorphic to $\bigoplus_{i=0}^{\dim X}\Omega^i_{X'_0/k}[-i]$, where $\Omega^{\bullet}_{X_0}$ is the de Rham complex of $X_0/k$. The main result of Deligne-Illusie asserts that for \emph{smooth} varieties, $X_0$ is $W_2=W_2(k)$-liftable if and only if  it is DR-decomposable. On the other hand, if $X_0$ is proper over $k$ and $\dim X_0<p$, the DR-decomposability of $X_0$ implies the $E_1$-degeneration of the Hodge to de Rham spectral sequence (for $\dim X_0=p$, the $E_1$-degeneration also holds by the Grothendieck duality). Properness on $X_0$ is required because of finite dimensionality of Hodge cohomologies. However, it is not clear whether one can remove the assumption on the dimension of $X_0$: this is exactly one of the two open problems posed by Illusie \cite{Illusie2002}. It is neither clear whether $E_1$-degeneration would imply the DR-decomposability. \\

This note grew out from our study on the other open problem posed by Illusie in loc. cit, that is about the generalization of Deligne-Illusie's main result to semi-stable varieties over $k$. Note that semi-stable varieties appear naturally in algebraic geometry as very typical singular varieties.  The problem is stated as follows: let $k$ be as above and $W=W(k)$ the ring of Witt vectors. For a semi-stable reduction $X$ over $W$, we set $X_0=X\times_{W}k$, the special fiber of $X$, and $F: X_0\to X'_0$ the relative Frobenius. Consider the complex of $\sO_{X_0}$-modules:
$$\Omega^{\log\bullet}_{X_0}=\Omega^{\bullet}_{X}(\textrm{log}X_0)|_{X_0}.$$
\begin{prob}[Illusie,  Problem 7.14 \cite{Illusie2002}]\label{prob_Illusie}
Is the complex $\tau_{<p}F_{\ast}\Omega^{\log\bullet}_{X_0}$
decomposable in $D(X'_0)$?
\end{prob}
Our answer to this problem is NO.  Indeed,  we constructed explicit examples of semi-stable reductions over $W$ negating the problem, whose dimension can be arbitrary large (in the curve case the answer is affirmative for cohomological reason) and the characteristic of $k$ can be arbitrary. See \S3 for the construction. We also examined the $E_1$-degeneration property of these examples. It turns out that all examples we constructed whose dimensions are less than or equal to the characteristic of the residue field have the $E_1$-degeneration property. This is a direct consequence of Theorem \ref{thm_degeneration_E1} and Deligne-Illusie's decomposation theorem. Therefore, the $E_1$-degeneration property is NOT equivalent to the DR-decomposability in the semi-stable (non-smooth) case. We are not aware of similar results in the smooth case.

\section{DR-decomposability and log deformation}
We use the log geometry as developed in the work \cite{KKato1988} to study Problem \ref{prob_Illusie}, and the construction of our examples is mainly based on a simple criterion of the DR-decomposability in terms of the existence of a log smooth deformation over the log scheme $(W_2(k),1\mapsto 0)$ (Theorem \ref{thm_decom_lifting}).\\

Let $X$ be a semi-stable reduction over $W$. Let $M_{X_0}$ (resp. $M_{\spec(k)}$) be the log structure on $X$ (resp. $\spec(W)$) attached to the reduced normal crossing divisor $X_0$ (resp. $\spec(k)$) (Example (1.5) \cite{KKato1988}). Then the extended morphism of log schemes $f: (X,M_{X_0})\to (\spec(W),M_{\spec(k)})$  is smooth. Let $(X_0,M_0)\to {\bf k}:=(k, 1\mapsto 0)$ be the base change of $f$ via the inclusion $\spec(k)\to \spec(W)$. When the context is clear, we denote the log scheme $(X_0,M_0)$ simply by $X_0$ (in some other occasion, we use $\underline{X}$ to denote the underlying scheme of a log scheme $X$).  It is known that the morphism $X_0\to {\bf k}$ is smooth, and the de Rham complex $\Omega^{\bullet}_{X_0/\bf{k}}$ of the log variety $X_0/\bf{k}$ is naturally isomorphic to the complex $\Omega^{\log\bullet}_{X_0}$ considered in \S1  (1.7 \cite{KKato1988}). Moreover, it is known that the log structure $M_0$ of $X_0$ is of semi-stable type:
\begin{defn}(\cite{Ollson2003})\label{defn_semistable_type}
A log variety $X$ over $\bf k$ is called semi-stable type if \'etale locally over each closed point $x\in \underline{X}$ it is strict smooth over $$(\spec(k(x)[x_1,\cdots,x_r]/(x_1\cdots x_r)),\bigoplus_{i=1}^r \mathbb{N}e_i,e_i\mapsto x_i),$$
where the log structure is induced by the homomorphism of monoids $\bigoplus_{i=1}^r \mathbb{N}e_i\rightarrow\sO_{\underline{X}}$ defined by $e_i\mapsto x_i$.
\end{defn}
Let $F$ be the absolute Frobenius of the log scheme $\mathbf{k}$ which is given by the commutative diagram
$$\xymatrix{
	k\ar[r]^{F_{k}} & k\\
	\mathbb{N} \ar[u]^0 \ar[r]^{\times p} & \mathbb{N}\ar[u]^0.\\
}$$
It is easy to verify that $F$ is liftable to the log scheme ${\bf W}_2:=(W_2, 1\mapsto 0)$ (but not to the log scheme $(W_2, 1\mapsto p)$!), and an obvious lifting $G$ over ${\bf W}_2$ is given by the following commutative diagram:
$$\xymatrix{
	W_2\ar[r]^{F_{W_2}} & W_2\\
	\mathbb{N} \ar[u]^0 \ar[r]^{\times p} & \mathbb{N}\ar[u]^0,\\
}$$
where $F_{W_2}$ is the Frobenius automorphism of $W_2$. A special case of the Kato's decomposition theorem is the following
\begin{thm}[Theorem 4.12 \cite{KKato1988}]\label{Kato decomposition}
Let $X/\bf k$ be a log variety of semi-stable type and $X'$ the base change of $X$ via the absolute Frobenius of $\bf k$.  Let $F_{X/\bf k}: X\to X'$ be the relative Frobenius. Then the complex $\tau_{<p}F_{X/{\bf k}\ast}\Omega^{\bullet}_{X/\bf k}$ is decomposable if and only if $X'$ is liftable to ${\bf W}_2$.
\end{thm}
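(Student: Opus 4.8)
The plan is to follow the general strategy of Deligne--Illusie, adapted to the logarithmic setting, and to realize the quasi-isomorphism via a local Frobenius lifting that compensates for the non-liftability of the relative Frobenius $F_{X/\mathbf{k}}$ itself. The key observation is that although $F_{X/\mathbf{k}}$ need not lift, the existence of a $\mathbf{W}_2$-lift of $X'$ together with the canonical lift $G$ of the Frobenius of $\mathbf{k}$ (constructed in the excerpt above) is precisely what is needed to produce local Frobenius lifts over the base.

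First I would reduce the global statement to a local one by the standard Zariski/\'etale descent argument for the decomposability of a complex in $D(X')$. Working \'etale-locally on $\underline{X}$, I would invoke the semi-stable type hypothesis to present $X$ as strict smooth over the standard model $(\spec(k[x_1,\dots,x_r]/(x_1\cdots x_r)),\bigoplus_i\mathbb{N}e_i)$, so that local log-smooth lifts to $\mathbf{W}_2$ exist and any two such lifts are related by a log derivation. Given a $\mathbf{W}_2$-lift $\widetilde{X}'$ of $X'$ (the hypothesis of the theorem) and a local $\mathbf{W}_2$-lift $\widetilde{X}$ of $X$, I would seek a lift $\widetilde{F}\colon\widetilde{X}\to\widetilde{X}'$ of the relative Frobenius covering $G$. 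For such a lift, reduction mod $p$ of $\frac{1}{p}\widetilde{F}^{*}$ on log differentials defines a map $\zeta\colon F^{*}_{X/\mathbf{k}}\Omega^1_{X'/\mathbf{k}}\to F_{X/\mathbf{k}\ast}\Omega^1_{X/\mathbf{k}}$, and the log Cartier isomorphism guarantees that $\zeta$ induces the identity on cohomology sheaves.

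Next I would assemble the local splittings into the desired quasi-isomorphism. Extending $\zeta$ multiplicatively via the antisymmetrization $\bigwedge^{i}$ yields, on each member of an \'etale cover, a map $\Omega^i_{X'/\mathbf{k}}[-i]\to\tau_{<p}F_{X/\mathbf{k}\ast}\Omega^{\bullet}_{X/\mathbf{k}}$ that is a quasi-isomorphism in degree $i$ by the log Cartier isomorphism (this is where the truncation $\tau_{<p}$ and the hypothesis on the range of degrees enter, exactly as in the smooth case). The local maps depend on the choice of $\widetilde{F}$, so on overlaps they differ by an explicit coboundary measured by the difference of two Frobenius lifts, which is a log derivation; this produces a \v{C}ech $1$-cocycle that one checks is a coboundary, yielding a globally defined morphism in $D(X')$ after passing to the truncated complex. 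Conversely, for the ``only if'' direction, decomposability of $\tau_{<p}F_{X/\mathbf{k}\ast}\Omega^{\bullet}_{X/\mathbf{k}}$ forces, via obstruction theory for log-smooth deformations, the vanishing of the class in $\mathrm{Ext}^1$ that obstructs lifting $X'$ to $\mathbf{W}_2$, giving the lift.

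The main obstacle I anticipate is the careful bookkeeping around the Frobenius of the base $\mathbf{k}$ and its lift $G$: unlike the smooth case over a perfect field where the base Frobenius is simply $\sigma$, here the log base is $(k,1\mapsto 0)$ and the relevant lift is $(W_2,1\mapsto 0)$ rather than $(W_2,1\mapsto p)$. One must check that the local Frobenius lifts $\widetilde{F}$ are required to cover $G$ and not some other lift, and that the induced map on log differentials is compatible with the monoid structure $e_i\mapsto x_i$; the semi-stable type presentation is what makes this explicit, since $\widetilde{F}$ can be written in coordinates as $x_i\mapsto x_i^{p}$ up to a correction in $p\mathcal{O}_{\widetilde{X}}$, and the $\dlog$ of the monoid generators transforms correctly precisely because $G$ sends $1\mapsto 0$ in the log structure. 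Verifying that this local construction glues and that the resulting class is the genuine deformation obstruction is the technical heart; everything else parallels the Deligne--Illusie argument recast in the framework of Kato's log crystalline/Cartier machinery.
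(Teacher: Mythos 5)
This theorem is quoted in the paper verbatim from Kato (Theorem 4.12 of \cite{KKato1988}); the paper supplies no proof of its own, so there is nothing internal to compare against. Your sketch is essentially Kato's original argument, i.e.\ the Deligne--Illusie method transported to the log setting: local liftings of the relative Frobenius over the lift $G$ of the base Frobenius, the map $\frac{1}{p}\widetilde{F}^{*}$ on log differentials realizing the inverse of the log Cartier isomorphism, multiplicative extension to degrees $i<p$, and, for the converse, the identification of splittings of $\tau_{\leq 1}F_{X/\mathbf{k}\ast}\Omega^{\bullet}_{X/\mathbf{k}}$ with $\mathbf{W}_2$-liftings of $X'$. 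You also correctly isolate the genuinely logarithmic points: the local Frobenius lifts must cover $G\colon(W_2,1\mapsto 0)\to(W_2,1\mapsto 0)$, and the Cartier isomorphism requires the morphism to be of Cartier type (of which semi-stable type is a special case).

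One step is stated incorrectly, though the surrounding architecture is right. You say the differences of the local splittings ``produce a \v{C}ech $1$-cocycle that one checks is a coboundary.'' That is not how Deligne--Illusie (or Kato) glue: the difference $\zeta_i-\zeta_j$ of the maps built from two Frobenius lifts $\widetilde{F}_i,\widetilde{F}_j$ is exact, i.e.\ of the form $d\circ h_{ij}$ for a homotopy $h_{ij}\colon F^{*}_{X/\mathbf{k}}\Omega^1_{X'/\mathbf{k}}\to F_{X/\mathbf{k}\ast}\sO_X$, and the resulting cochain $(h_{ij})$ is in general \emph{not} a coboundary --- if it were, the local splittings would glue to an honest global morphism of complexes, which fails in general. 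The correct procedure is to assemble the pairs $(\zeta_i, h_{ij})$ into a single morphism from $\Omega^1_{X'/\mathbf{k}}[-1]$ to the \v{C}ech total complex of $F_{X/\mathbf{k}\ast}\Omega^{\bullet}_{X/\mathbf{k}}$ relative to the cover, which represents the same object of $D(X')$; what one verifies is the cocycle compatibility of the $(h_{ij})$ on triple overlaps, not their triviality. With that correction your outline matches the standard proof of the cited theorem.
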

Remark that Kato's decomposition theorem works for a log variety of Cartier type which is more general than semi-stable type (Definition 4.8 \cite{KKato1988}). In the following, we show further that $X'$ is liftable to ${\bf W}_2$ if and only if $X$ itself is liftable to ${\bf W}_2$, and hence we obtain the following criterion for DR-decomposability:
\begin{thm}\label{thm_decom_lifting}
Notation and assumption as Theorem \ref{Kato decomposition}. Then
the complex $\tau_{<p}F_{X/{\bf k}\ast}\Omega^{\bullet}_{X/\bf k}$ is decomposable if and only if $X$ is liftable to ${\bf W}_2$.
\end{thm}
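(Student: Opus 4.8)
The plan is to reduce the statement to the single equivalence
\[
X \text{ is liftable to } \mathbf{W}_2 \iff X' \text{ is liftable to } \mathbf{W}_2,
\]
which, combined with Theorem \ref{Kato decomposition}, gives the assertion. Two structural facts drive everything. First, the projection $\pi\colon X'\to X$ covering $F\colon\mathbf{k}\to\mathbf{k}$ is an \emph{isomorphism on underlying schemes}: since $\underline{X'}=\underline{X}\times_{\spec k,F_k}\spec k$ and $F_k$ is an automorphism (here $k$ is perfect), we get an isomorphism $\underline{\pi}\colon\underline{X'}\xrightarrow{\sim}\underline{X}$. Because log Kähler differentials commute with base change, $\Omega^1_{X'/\mathbf{k}}\cong\underline{\pi}^{\ast}\Omega^1_{X/\mathbf{k}}$, and dually $\sT_{X'/\mathbf{k}}\cong\underline{\pi}^{\ast}\sT_{X/\mathbf{k}}$ for the log tangent sheaves $\sT=(\Omega^1)^{\vee}$. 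Second, and crucially, the Frobenius $F$ lifts to the morphism $G\colon\mathbf{W}_2\to\mathbf{W}_2$ exhibited above, whose underlying map $F_{W_2}$ is again an automorphism since $k$ is perfect.

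The implication ``$X$ liftable $\Rightarrow$ $X'$ liftable'' is then transparent: if $\tilde X\to\mathbf{W}_2$ is a log smooth lifting of $X$, then the base change $\tilde X\times_{\mathbf{W}_2,G}\mathbf{W}_2$ along $G$ is log smooth over $\mathbf{W}_2$ and reduces modulo $p$ to $X\times_{\mathbf{k},F}\mathbf{k}=X'$, hence is a lifting of $X'$.

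For the converse, which is the real content, I would argue through the deformation theory of log smooth morphisms. The closed immersion $\mathbf{k}\hookrightarrow\mathbf{W}_2$ is a \emph{strict} square-zero extension whose ideal pulls back to $\sO_{X}$, so the obstruction to lifting $X$ (resp.\ $X'$) over this extension is a single class $o_X\in H^2(\underline{X},\sT_{X/\mathbf{k}})$ (resp.\ $o_{X'}\in H^2(\underline{X'},\sT_{X'/\mathbf{k}})$), and a lifting exists precisely when this class vanishes. Now the pair $(F,G)$ is an endomorphism of the extension $\mathbf{k}\hookrightarrow\mathbf{W}_2$ (this is exactly the statement that $G$ lifts $F$), and $X'$ is the base change of $X$ along it; functoriality of the obstruction class for this base change identifies $o_{X'}$ with the image of $o_X$ under the pullback
\[
\underline{\pi}^{\ast}\colon H^2(\underline{X},\sT_{X/\mathbf{k}})\to H^2\!\big(\underline{X'},\underline{\pi}^{\ast}\sT_{X/\mathbf{k}}\big)=H^2(\underline{X'},\sT_{X'/\mathbf{k}}).
\]
Since $\underline{\pi}$ is an isomorphism of schemes, $\underline{\pi}^{\ast}$ is bijective (albeit $F_k$-semilinear over $k$), so $o_X=0$ if and only if $o_{X'}=0$. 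This proves both implications simultaneously.

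The step I expect to demand the most care is the functoriality identity $o_{X'}=\underline{\pi}^{\ast}(o_X)$: one must verify that base-changing the lifting problem along the pair $(F,G)$ genuinely transports the obstruction class. This rests squarely on the existence of the lift $G$ (without a lift of $F$ there is no comparison map at all) and on faithfully tracking the log structures through the \emph{non-strict} morphism $F$ while its underlying scheme map remains an isomorphism. Once this compatibility is established, the perfectness of $k$ finishes the argument by rendering $\underline{\pi}^{\ast}$ a bijection on $H^2$.
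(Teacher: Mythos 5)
Your proposal is correct and follows essentially the same route as the paper: the easy direction by base change along the lift $G$ of Frobenius, and the converse by transporting the obstruction class in $H^2$ of the log tangent sheaf along $G$ and using that the underlying scheme map $\underline{X'}\to\underline{X}$ is an isomorphism, so the (semilinear) pullback on $H^2$ is bijective. The only difference is that the paper makes the functoriality step $o_{X'}=\sigma^{\ast}(o_X)$ explicit at the \v{C}ech-cocycle level, which is exactly the point you flagged as needing care.
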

\begin{proof}
Via the base change by $G$, one obtains a $\mathbf{W}_2$-lifting of $X'$ from that of $X$.  Since $G$ is not an isomorphism of log schemes, our argument is to show the converse nevertheless is still true. Let $\omega_{X}\in H^2(X,T_{X/\mathbf{k}})$ (resp. $\omega_{X'}\in H^2(X',T_{X'/\mathbf{k}})$) be the obstruction class of the lifting of $X$ (resp. $X'$) to $\mathbf{W}_2$. Recall that $\omega_{X}$ is constructed as follows: Let $\{U_i\}$ be an affine cover of $X$. Choosing for each $U_i$ a log smooth lifting $\sU_i$ on $\mathbf{W}_2$, we then have that on each overlap $U_{ij}=U_i\cap U_j$ there exists an isomorphism $\alpha_{ij}:\sU_j|{U_{ij}}\rightarrow \sU_i|{U_{ij}}$. Then $\omega_{X}$ is represented by $\{(U_i\cap U_j\cap U_k, \alpha_{ij}\alpha_{jk}\alpha_{ki})\}$. Because of the existence of $G$, $\{(G^{-1}(U_i\cap U_j\cap U_k), G^\ast\alpha_{ij}\alpha_{jk}\alpha_{ki})\}$ represents $\omega_{X'}$. Thus we have that  $\sigma^\ast(\omega_{X})=\omega_{X'}$ through the canonical map
$$\xymatrix{
H^2(X',T_{X'/\mathbf{k}}) \ar@{}[r]|= & H^2(X',\sigma^\ast T_{X/\mathbf{k}})\\
 & H^2(X,T_{X/\mathbf{k}})\ar[u]^{\sigma^\ast}
}.$$
The above equality uses the fact that $\sigma^*\Omega_{X/\mathbf{k}}=\Omega_{X'/\mathbf k}$ and that both sheaves are locally free (see 1.7 and Proposition 3.10 \cite{KKato1988}). However, since $\sigma$ is an isomorphism of schemes, the map $\sigma^*$ in the vertical line is bijective.  It follows immediately that $\omega_{X}=0$ under the assumption that $X'$ is $\mathbf{W}_2$-liftable and hence $X$ itself is $\mathbf{W}_2$-liftable.
\end{proof}
The following corollary ensures it is valid to assume $k$ is algebraically closed in the study of Problem \ref{prob_Illusie}.
\begin{cor}\label{cor_change_field}
Let $f:X\rightarrow \mathbf{k}$ be a smooth morphism of semistable type and $k'$ be a perfect field containing $k$. Denote by $\mathbf{k'}$ the field $k'$ with the induced log structure from $\mathbf{k}$ and by $X_{\mathbf{k'}}$ the log base change. Then $\tau_{<p}F_{X/{\bf k}\ast}\Omega^{\bullet}_{X/\mathbf{k}}$ is decomposable if and only if $\tau_{<p}F_{X_{\mathbf{k'}}/{\bf k'}\ast}\Omega^{\bullet}_{X_{\mathbf{k'}}/{\mathbf{k'}}}$ is decomposable.
\end{cor}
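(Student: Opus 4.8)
The plan is to reduce the statement to a comparison of liftability via Theorem \ref{thm_decom_lifting} and then to settle that comparison by a faithfully flat descent of the obstruction class. First I would observe that the log base change $X_{\mathbf{k'}}$ is again of semistable type: the defining condition in Definition \ref{defn_semistable_type} is étale-local strict smoothness over the standard model $\spec(k(x)[x_1,\dots,x_r]/(x_1\cdots x_r))$, and both strict smoothness and the standard log structure are preserved under base change along $\mathbf{k'}\to\mathbf{k}$. Hence Theorem \ref{thm_decom_lifting} applies to both $X/\mathbf{k}$ and $X_{\mathbf{k'}}/\mathbf{k'}$, and the assertion becomes: $X$ is liftable to $\mathbf{W}_2=(W_2(k),1\mapsto 0)$ if and only if $X_{\mathbf{k'}}$ is liftable to $\mathbf{W}_2'=(W_2(k'),1\mapsto 0)$.

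For one direction, functoriality of Witt vectors along $k\hookrightarrow k'$ produces $W_2(k)\to W_2(k')$, and with the log structures $1\mapsto 0$ on both sides this is a strict morphism $\mathbf{W}_2'\to\mathbf{W}_2$ whose reduction modulo $p$ is $\mathbf{k'}\to\mathbf{k}$. Thus, given a log smooth lifting $\sX\to\mathbf{W}_2$ of $X$, the base change $\sX\times_{\mathbf{W}_2}\mathbf{W}_2'$ is a log smooth lifting of $X_{\mathbf{k'}}$ over $\mathbf{W}_2'$; so $\mathbf{W}_2$-liftability of $X$ implies $\mathbf{W}_2'$-liftability of $X_{\mathbf{k'}}$.

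The substantial direction is the converse. Let $\omega_X\in H^2(X,T_{X/\mathbf{k}})$ and $\omega_{X_{\mathbf{k'}}}\in H^2(X_{\mathbf{k'}},T_{X_{\mathbf{k'}}/\mathbf{k'}})$ be the obstruction classes, which vanish precisely when the respective liftings exist. Since $\mathbf{k'}\to\mathbf{k}$ is strict and $T_{X/\mathbf{k}}$ commutes with this base change (1.7 and Proposition 3.10 \cite{KKato1988}), the Čech representative $\{(U_i\cap U_j\cap U_k,\alpha_{ij}\alpha_{jk}\alpha_{ki})\}$ used in the proof of Theorem \ref{thm_decom_lifting} pulls back to a representative of $\omega_{X_{\mathbf{k'}}}$; that is, $\omega_{X_{\mathbf{k'}}}$ is the image of $\omega_X$ under the base change map
$$\beta:H^2(X,T_{X/\mathbf{k}})\longrightarrow H^2(X_{\mathbf{k'}},T_{X_{\mathbf{k'}}/\mathbf{k'}}).$$
As $X$ is of finite type over $k$, hence quasi-compact and quasi-separated, and $k\to k'$ is flat, the flat base change theorem identifies $\beta$ with the canonical map $H^2(X,T_{X/\mathbf{k}})\to H^2(X,T_{X/\mathbf{k}})\otimes_k k'$, $c\mapsto c\otimes 1$. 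This map is injective because $k'$ is a free $k$-module in which $1$ extends to a basis. Therefore $\omega_{X_{\mathbf{k'}}}=\beta(\omega_X)=0$ forces $\omega_X=0$, and $X$ is $\mathbf{W}_2$-liftable.

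The main obstacle I anticipate is not the descent itself but the bookkeeping that makes it legitimate: verifying that the obstruction class is genuinely functorial under the strict (and non-isomorphic) base change $\mathbf{W}_2'\to\mathbf{W}_2$, so that the identity $\omega_{X_{\mathbf{k'}}}=\beta(\omega_X)$ holds, and confirming the hypotheses of flat base change for the log tangent sheaf. Once these compatibilities are in place, faithful flatness of $k\to k'$ yields the injectivity of $\beta$ and hence the result; this injectivity is the genuine content, playing here the role that the bijectivity of $\sigma^\ast$ played in the proof of Theorem \ref{thm_decom_lifting}.
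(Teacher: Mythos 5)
Your proposal is correct and follows essentially the same route as the paper: both reduce to a liftability comparison via Theorem \ref{thm_decom_lifting}, identify the obstruction class of $X_{\mathbf{k'}}$ with the image of that of $X$ under the flat base change map $H^2(X,T_{X/\mathbf{k}})\to H^2(X,T_{X/\mathbf{k}})\otimes_k k'$, and conclude from the injectivity of this map (faithful flatness of $k\to k'$). The paper only spells out the nontrivial direction and leaves the easy base-change direction implicit, but the substance is identical.
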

\begin{proof}
By Theorem \ref{thm_decom_lifting}, it is enough to show that a $(W_2(k'), \mathbb{N}\mapsto 0)$-lifting of $X_{\mathbf{k'}}$ induces a $(W_2(k), \mathbb{N}\mapsto 0)$-lifting of $X$. By the flat base change, one has the isomorphism $H^2(X,T_{X/\mathbf{k}})\otimes_kk'=H^2(X_{\mathbf{k'}}, T_{X/\mathbf{k'}})$ and hence the injection $\alpha: H^2(X,T_{X/\mathbf{k}})\to H^2(X_{\mathbf{k'}}, T_{X/\mathbf{k'}})$. Then, by the same arguments in Theorem \ref{thm_decom_lifting}, the obstruction class $ob_k$ to lifting $X$ to $\mathbf{W_2(k)}$ is mapped to to the obstruction class $ob_{k'}$ of lifting $X_{\mathbf{k'}}$ to $(W_2(k'),\mathbb{N}\mapsto 0)$ via the map $\alpha$. By the condition that $\alpha(ob_k)=ob_{k'}=0$, it follows that $ob_k=0$.
\end{proof}

\begin{rmk}
After presenting our results, Weizhe Zheng provided us a more conceptual proof of Theorem \ref{thm_decom_lifting}: Denote by $\textrm{Lift}(X)$ (resp.$\textrm{Lift}(X')$) the groupoid of liftings of $X$ (rsep. $X'$) over $\mathbf{W}_2$. Let $G:\mathbf{W}_2\rightarrow\mathbf{W}_2$ be a lifting of the log Frobenius morphism $F:\mathbf{k}\rightarrow\mathbf{k}$. Given a lifting $X^{(1)}\in \textrm{Lift}(X)$, the pullback of $X^{(1)}$ along $G$ gives an object in $\textrm{Lift}(X')$. With the obvious assignments on morphisms, one can get a functor
$$
A: \textrm{Lift}(X)\rightarrow\textrm{Lift}(X').
$$
Conversely, let $X'^{(1)}\in \textrm{Lift}(X')$ be a lifting of $X'$. Denote by $i:X'\hookrightarrow X'^{(1)}$ the canonical strict closed immersion and by $\sigma:X'\rightarrow X$ the base change of $F:\mathbf{k}\rightarrow\mathbf{k}$. Recall that $\underline{\sigma}:\underline{X'}\rightarrow\underline{X}$ is an isomorphism and $\sM_{X'}\simeq\sM_X\oplus_{\sK_k}\sM_k$. One can construct the pushout $X'^{(1)}\amalg_{X'}X$ of the diagram
$$\xymatrix{
X' \ar[r]^{\sigma} \ar[d]^i & X\\
X'^{(1)} &
}$$
as follows:
\begin{itemize}
  \item The underlying scheme $\underline{X'^{(1)}\amalg_{X'}X}$ is defined to be $\underline{X'^{(1)}}$,
  \item the log structure of $X'^{(1)}\amalg_X'X$ is defined to be $\sM_{X'^{(1)}}\times_{\sM_{X'}}\sM_X$.
\end{itemize}
With the obvious assignments on morphisms, the pushout process along $\sigma:X'\rightarrow X$ gives a functor
$$
B: \textrm{Lift}(X')\rightarrow\textrm{Lift}(X).
$$
It is straightforward to check the following proposition.
\begin{prop}
The functor $A$ gives an  equivalence of groupoids, and the functor $B$ is its quasi-inverse.
\end{prop}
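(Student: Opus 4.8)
The plan is to exhibit two natural isomorphisms $B\circ A\cong\mathrm{id}_{\textrm{Lift}(X)}$ and $A\circ B\cong\mathrm{id}_{\textrm{Lift}(X')}$; for functors between groupoids this is exactly the assertion that $A$ is an equivalence with quasi-inverse $B$, so once these are in place nothing further is needed. Because a lifting is a deformation across the square-zero thickening $\mathbf{k}\hookrightarrow\mathbf{W}_2$, every object, morphism, and comparison map in sight may be analysed \'etale-locally on $\underline{X}$ via the semistable chart $\bigoplus_{i=1}^r\mathbb{N}e_i\to\sO$, and then reassembled using the naturality of the comparisons.

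First I would confirm that $B$ actually takes values in $\textrm{Lift}(X)$. The pushout $X'^{(1)}\amalg_{X'}X$ has underlying scheme $\underline{X'^{(1)}}$, which is legitimate as a lift of $\underline{X}$ precisely because $\underline{\sigma}$ is an isomorphism; its log structure $\sM_{X'^{(1)}}\times_{\sM_{X'}}\sM_X$ is again fine since $i\colon X'\hookrightarrow X'^{(1)}$ is strict with square-zero ideal, and restricting it along $\underline{X}\hookrightarrow\underline{X'^{(1)}}$ returns $\sM_X$ by construction. Log smoothness over $\mathbf{W}_2$ is then a local check on the semistable chart, where the monoid fiber product can be written down explicitly.

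Next I would build the two comparison transformations by unwinding the definitions. For $B\circ A$, take $X^{(1)}\in\textrm{Lift}(X)$: then $A(X^{(1)})=G^\ast X^{(1)}$ has underlying scheme $\underline{X^{(1)}}$, since $F_{W_2}$ is an automorphism of $W_2$, and log structure the Frobenius-twisted pullback of $\sM_{X^{(1)}}$; applying $B$ gives $G^\ast X^{(1)}\amalg_{X'}X$, and the desired isomorphism onto $X^{(1)}$ is the identity on underlying schemes together with the canonical identification $\sM_{G^\ast X^{(1)}}\times_{\sM_{X'}}\sM_X\cong\sM_{X^{(1)}}$ on log structures. For $A\circ B$ the analogous map $G^\ast\!\left(X'^{(1)}\amalg_{X'}X\right)\xrightarrow{\ \sim\ }X'^{(1)}$ is produced the same way. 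Functoriality in the lifting, hence naturality of both transformations, is immediate from the universal properties of the monoid fiber product and of the Frobenius pullback, and it is this functoriality that also records the effect on morphisms of liftings.

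The main obstacle is precisely the log-structure bookkeeping underlying these identifications: one must verify that the Frobenius-twisted pullback along $G$ (which on a chart multiplies the monoid generator by $p$) and the fiber product $-\times_{\sM_{X'}}\sM_X$ are genuinely mutually inverse operations on fine log structures lifting that of $X$, compatibly with the structure morphisms to $\mathbf{W}_2$ and with reduction modulo $p$. Working on the semistable chart and using $\sM_{X'}\simeq\sM_X\oplus_{\sK_k}\sM_k$, I would compute both composite log structures, exhibit the canonical identification, and check that it respects isomorphisms of liftings so that the comparison maps assemble into the required natural isomorphisms. Everything else is formal once this local identification is established.
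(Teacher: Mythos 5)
The paper itself offers no argument here---it simply declares the proposition ``straightforward to check''---and your outline is exactly the direct verification that declaration has in mind: show $B$ lands in $\textrm{Lift}(X)$, exhibit unit and counit isomorphisms $B\circ A\cong\mathrm{id}$ and $A\circ B\cong\mathrm{id}$ by comparing the Frobenius pullback along $G$ with the fiber product $-\times_{\sM_{X'}}\sM_X$ on semistable charts, and check naturality. Your approach is correct and identifies the one genuinely delicate point (the local monoid bookkeeping, including compatibility of the structure maps to $\mathbf{W}_2$, where the generator of $\mathbb{N}$ must be matched with its image under $G$), so nothing further is needed beyond actually carrying out that chart computation.
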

\end{rmk}

\section{Examples}
In this section, $k$ is an algebraically closed field of characteristic $p>0$. We proceed to construct examples of semi-stable reductions over $W$ whose special fibers do not admit log deformation to ${\bf W}_2$, which negate Problem \ref{prob_Illusie} because of Theorem \ref{thm_decom_lifting}.
\subsection{More preparations}
The first lemma is another characterization of semi-stable reductions over $W=W(k)$.
\begin{lem}\label{dejong lemma}
Let $K_0$ be the fractional field of $W$. Then an $W$-scheme $X$ is a semi-stable reduction over $W$ if and only if the following two properties hold:
\begin{enumerate}
	\item the generic fiber $X_{K_0}=X\times_W{K_0}$ is smooth over $K_0$,
	\item the special fiber $X_{k}=X\times_Wk$ is a normal crossing variety over $k$.
\end{enumerate}
\end{lem}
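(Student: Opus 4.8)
The plan is to work from the étale-local definition of a semi-stable reduction, namely that $X$ is flat over $W$ and étale-locally isomorphic to $\spec(W[x_1,\dots,x_n]/(x_1\cdots x_r-p))$ for some $1\le r\le n$. The forward implication is then a direct computation on this model: inverting $p$ turns the equation $x_1\cdots x_r=p$ into a relation in which $p$ is a unit, whose Jacobian $(\prod_{j\ne i}x_j)_i$ cannot vanish on the hypersurface (each $x_i$ with $i\le r$ is a unit there), so $X_{K_0}$ is smooth; setting $p=0$ gives $k[x_1,\dots,x_n]/(x_1\cdots x_r)$, which is a normal crossing variety. Thus (1) and (2) hold, and the real content is the converse.

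For the converse, observe that away from the special fiber $X$ agrees with its smooth generic fiber by (1), so it suffices to produce the standard model at each closed point $x\in X_k$. First I would check flatness over $W$: since $W$ is a discrete valuation ring this amounts to $p$ being a nonzerodivisor on $\mathcal{O}_X$, which follows from $X_k=V(p)$ being a reduced divisor of codimension one, so that no component of $X$ lies in the special fiber and $\dim\mathcal{O}_{X,x}=\dim\mathcal{O}_{X_k,x}+1=n$. Using the normal crossing structure I would then choose, étale-locally, branch coordinates with $\mathcal{O}_{X_k,x}\cong k[x_1,\dots,x_n]/(x_1\cdots x_r)$ and lift $x_1,\dots,x_n$ to $\mathcal{O}_{X,x}$. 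Because $x_1\cdots x_r$ vanishes on $X_k=V(p)$, there is a relation $x_1\cdots x_r=p\cdot u$ in $\mathcal{O}_{X,x}$, and the whole argument reduces to showing that $u$ is a unit. Granting this, after rescaling one coordinate we get exactly $x_1\cdots x_r=p$, which forces $\mathfrak{m}_{X,x}=(x_1,\dots,x_n)$ to be generated by $n=\dim\mathcal{O}_{X,x}$ elements; hence $\mathcal{O}_{X,x}$ is regular, and Cohen's structure theorem identifies its henselization with that of the standard model, giving semi-stability.

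The main obstacle is precisely this last step: proving that $u$ is a unit, equivalently that the special fiber appears with multiplicity one and that $X$ is regular along $X_k$. Reducedness of $X_k$ by itself does not suffice, and this is the delicate point. For instance $\spec(W[x,y]/(xy-p^2))$ has smooth generic fiber and abstractly normal crossing special fiber $k[x,y]/(xy)$, yet here $x_1\cdots x_r=xy=p^2=p\cdot p$, so $u=p$ is not a unit and $X$ is normal but not regular. Hence the normal crossing hypothesis in (2) must be understood in $X$, as the assertion (already implicit in the paper's phrase ``the reduced normal crossing divisor $X_0$'') that $X_0=V(p)$ is a normal crossing divisor in $X$, i.e.\ that its branches extend to part of a regular system of parameters at each point of $X_k$; combined with (1), which governs the complement of $X_0$, this yields $u\in\mathcal{O}_{X,x}^{\times}$. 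I expect the verification that these two conditions together produce a regular system of parameters realizing $x_1\cdots x_r=p$ to be the technical heart of the argument.
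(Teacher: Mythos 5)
The paper does not actually prove this lemma: its entire ``proof'' is the citation to de Jong \cite{deJong1996}, 2.16, so there is no in-paper argument to compare yours against step by step. Judged on its own terms, your forward direction is correct and complete. The converse, however, is left unfinished at exactly the point you flag --- showing that $u$ is a unit in $x_1\cdots x_r=p\cdot u$ --- and this is not a deferrable verification: as your own example $\spec(W[x,y]/(xy-p^2))$ shows, the implication is \emph{false} if condition (2) is read intrinsically (``$X_k$ is abstractly a normal crossing variety''). That scheme is flat over $W$ with smooth generic fiber and special fiber $k[x,y]/(xy)$, yet it is not regular at the origin of the special fiber, hence not \'etale-locally of the form $\spec(W[x_1,\dots,x_n]/(x_1\cdots x_r-p))$. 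So no argument can close the gap as you have set the problem up; the hypothesis itself must be strengthened.

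That strengthening is exactly what the cited source (and the paper's own usage, ``the log structure on $X$ attached to the reduced normal crossing divisor $X_0$'') builds in: condition (2) is a condition on the pair $(X,X_k)$, namely that $X$ is regular along $X_k$ and $X_k$ is a reduced normal crossings \emph{divisor in} $X$, cut out \'etale-locally by $x_1\cdots x_r$ with $x_1,\dots,x_r$ part of a regular system of parameters. Under that reading your sketch does close: the scheme-theoretic equality $X_k=V(p)=V(x_1\cdots x_r)$ with $X_k$ reduced gives $(p)=(x_1\cdots x_r)$ as ideals of $\sO_{X,x}$, hence $p=u\,x_1\cdots x_r$ with $u$ a unit, and absorbing $u$ into one coordinate yields the standard local model; regularity of $X$ is now a hypothesis rather than something to be derived from (1) and (2). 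In summary, your diagnosis of where the difficulty sits is accurate and your counterexample is a genuine one against the literal statement, but the proof as written is incomplete, and what is missing is not a clever argument but a corrected (embedded) reading of hypothesis (2), which is how de Jong's 2.16 states it.
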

\begin{proof}
See \cite{deJong1996}, 2.16.
\end{proof}
The second lemma is rather standard.
\begin{lem}\label{lem_w2lifting}
Let $X/{\bf k}$ be a log variety of semi-stable type. Assume the irreducible components $\{X_i, i\in I\}$ of the underlining variety $\underline{X}$ to be smooth. Let $\sX$ be a smooth deformation $X$ over ${\bf W}_2$. Then the underlying scheme of $\sX$ is written into the schematic union of closed subschemes $\underline{\sX}=\bigcup_{i\in I} \sX_i$s with the property that, for each nonempty $J\subseteq I$, the schematic intersection $\bigcap_{j\in J}\sX_j$ is a $W_2$-lifting of $\bigcap_{j\in J}X_j$.
\end{lem}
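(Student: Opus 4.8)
The plan is to construct each $\sX_i$ as the \emph{unique} flat lift of the component $X_i$ inside $\underline{\sX}$, and then to read off the asserted union and intersection properties from the local model. The whole content is the gluing of the local pieces into global components; the local picture is elementary.

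First I would pin down the local structure. Since $\sX\to\mathbf{W}_2$ is log smooth and $X$ is of semi-stable type, \'etale locally around each point $\underline{\sX}$ is isomorphic to the standard model $\spec W_2[x_1,\dots,x_r,y_1,\dots,y_s]/(x_1\cdots x_r)$, with log structure $\bigoplus_i\mathbb{N}e_i\to\sO$, $e_i\mapsto x_i$, over $\mathbf{W}_2=(W_2,1\mapsto 0)$; its closed fiber is the corresponding standard model of $X$. In such a chart the pieces $V(x_i)=\spec W_2[\dots]$ are the obvious candidates for the $\sX_i$: each is smooth, hence flat, over $W_2$ and reduces modulo $p$ to the branch $\{x_i=0\}$ of $\underline X$. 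Because each $X_i$ is smooth, hence locally irreducible, the branches through a given point are in bijection with the global components passing through it, which canonically labels the local pieces $V(x_i)$ by the global components $X_i$.

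The heart of the matter is to show that these local pieces glue, i.e. that the $W_2$-flat lift of $X_i$ inside $\underline{\sX}$ is unique. By standard deformation theory the liftings of the closed subscheme $X_i\hookrightarrow\underline X$ to $W_2$-flat closed subschemes of $\underline{\sX}$, when nonempty, form a torsor under $H^0(X_i,N_{X_i/\underline X})$ (tensored with the square-zero ideal $(p)$), where $N_{X_i/\underline X}=\mathcal{H}om_{\sO_{X_i}}(\sI_i/\sI_i^2,\sO_{X_i})$ and $\sI_i$ is the ideal sheaf of $X_i$. I would prove this normal sheaf vanishes. Away from the intersection locus $D_i:=X_i\cap\overline{(\underline X\setminus X_i)}$ the reduced normal crossing scheme $\underline X$ coincides with its component $X_i$, so $\sI_i$ is zero there; hence the conormal sheaf $\sI_i/\sI_i^2$ is a torsion $\sO_{X_i}$-module supported on the proper closed subset $D_i$. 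Since $X_i$ is integral, $\sO_{X_i}$ is torsion free, so every homomorphism from a torsion sheaf into it vanishes; thus $N_{X_i/\underline X}=0$. Local lifts are therefore unique, and since they exist locally (given by $V(x_i)$ in each chart) they automatically glue to a global closed subscheme $\sX_i\subseteq\underline{\sX}$, flat over $W_2$ and lifting $X_i$.

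It then remains to verify the union and intersection assertions, both local and both reducing to computations in the standard model. For the union, at a point through which the branches indexed by $S\subseteq I$ pass one has $\bigcap_{i\in S}(x_i)=(x_1\cdots x_r)$ in $W_2[x,y]/(x_1\cdots x_r)$ — the elementary identity that the principal ideals $(x_i)$ meet in their product, which I would check still holds over the non-reduced base $W_2$ — so $\underline{\sX}=\bigcup_{i\in I}\sX_i$ schematically. For the intersections, for nonempty $J\subseteq I$ the schematic intersection $\bigcap_{j\in J}\sX_j$ is locally $V(x_j:j\in J)=\spec W_2[\{x_i:i\in S\setminus J\},y]$, which is smooth over $W_2$, flat, and reduces modulo $p$ to $\bigcap_{j\in J}X_j$; hence it is a $W_2$-lifting of $\bigcap_{j\in J}X_j$. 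The main obstacle is the gluing step, where a priori the local pieces $V(x_i)$ could fail to match consistently across charts; the vanishing $N_{X_i/\underline X}=0$, forced by the integrality of the components together with the fact that the conormal sheaf lives on the intersection locus, is precisely what removes this ambiguity.
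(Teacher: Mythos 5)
Your proof is correct, and it reaches the conclusion by a genuinely different mechanism from the paper's. The paper produces the components in one global stroke: it defines the ideal sheaf $\sI_i$ directly from the ideal sheaf $I_i$ of $X_i$ in $\underline{X}$, and then verifies flatness of $\sO_{\underline{\sX}}/\sI_i$, the identity $\bigcap_i\sI_i=0$, and flatness of the intersections after the faithfully flat base change to $\widehat{\sO_{\underline{\sX},x}}\cong W_2[[x_1,\dots,x_n]]/(x_1\cdots x_r)$, where each $\sI_i$ becomes a monomial ideal; no gluing argument is needed because the definition is already global. You instead build the $\sX_i$ \'etale-locally from Kato's standard chart and glue them by a rigidity statement: embedded $W_2$-flat lifts of $X_i$ in $\underline{\sX}$ form a pseudo-torsor under $H^0(X_i,N_{X_i/\underline{X}})\otimes(p)$, and $N_{X_i/\underline{X}}=\mathcal{H}om_{\sO_{X_i}}(I_i/I_i^2,\sO_{X_i})$ vanishes because the conormal sheaf is supported on the proper closed locus where $X_i$ meets the other components while $\sO_{X_i}$ is torsion-free. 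Both arguments use smoothness (hence integrality and unibranch-ness) of the components in an essential way, and both end with the same local-model computations for the union and the intersections. What your route buys is a clean answer to a well-definedness question the paper leaves implicit: a lift of a local generator of $I_i$ is only determined up to $p\sO_{\underline{\sX}}$, so the ideal it generates a priori depends on the choice, and your vanishing $N_{X_i/\underline{X}}=0$ is exactly the statement that there is no choice to make. What the paper's route buys is brevity, avoiding the deformation theory of embedded subschemes and \'etale descent of ideal sheaves altogether. Two points deserve one more line each in a full write-up: the transfer of the standard-model computations to $\underline{\sX}$ itself should be done, as in the paper, after the faithfully flat map $\sO_{\underline{\sX},x}\to\widehat{\sO_{\underline{\sX},x}}$; and the identity $\bigcap_j(x_j)=(x_1\cdots x_r)$ over the non-reduced base, which you rightly flag, does hold (reduce modulo $p$ and use that $W_2[x_1,\dots,x_n]$ is a free $W_2$-module), so your schematic union statement goes through.
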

\begin{proof}
Set $$\sI_i=I_i+pI_i,$$ where $I_i$ is the ideal sheaf of $X_i$ in $X$. Then, $\sI_i$ is an ideal sheaf of $\sO_{\underline{\sX}}$. We claim that the closed subschemes $\sX_i$s defined by $\sI_i$s have the property in the lemma. To show this it suffices to prove the following properties:
\begin{enumerate}
  \item $\sO_{\underline{\sX}}/\sI_i$ is flat over $W_2$,
  \item $\bigcap \sI_i=0$, and
  \item for each nonempty $J\subseteq I$, $\sO_{\underline{\sX}}/\cup_{j\in J}\sI_j$ is flat over $W_2$.
\end{enumerate}
Since $\widehat{\sO_{\underline{\sX},x}}$ is faithfully flat over $\sO_{\underline{\sX},x}$ for each point $x\in \underline{\sX}$, it suffices to verify the above claim after tensoring with $\widehat{\sO_{\underline{\sX},x}}$ for every $x\in\underline{\sX}$. By (\cite{KKato1988} Theorem 3.5, Proposition 3.14), there is an \'etale morphism $U\rightarrow \underline{\sX}$ such that we have
$$\xymatrix{
U\ar[r]^-f \ar[dr]_{\pi'|_{U}} &\textrm{Spec}(W_2[x_1,\cdots,x_n]/(x_1\cdots x_r)) \ar[d]\\
 & \spec(W_2)
},$$
where $f$ is an \'etale morphism. As a consequence, there is an isomorphism
$$\alpha:\widehat{\sO_{\underline{\sX},x}}\cong W_2[[x_1,\cdots,x_n]]/(x_1\cdots x_r)$$ such that each $\sI_i\widehat{\sO_{\underline{\sX},x}}$ (whenever it is nonempty) is generated by $\alpha^{-1}(\Pi_{j\in J_i}x_j)$ for some nonempty set $J_i\subseteq\{1,\cdots,r\}$. Moreover, $\{1,\cdots,r\}$ is the disjoint union of $J_i$s. Then the claim follows from direct calculations.
\end{proof}
By the above two lemmas, we can conclude the following
\begin{prop}\label{simple lemma}
Let $Z$ be a smooth scheme over $W$. Let $Y_0$ be a smooth closed subvariety of $Z_0=Z\times_Wk$. Set $X=Bl_{Y_0}Z$, the blowup of $Z$ along the closed subscheme $Y_0$. Then $X$ is a semi-stable reduction over $W$, whose special fiber $X_0$ is a simple normal crossing divisor consisting of two smooth components $Bl_{Y_0}Z_0$ and $\mathbb{P}(N_{Y_0/Z})$ (the projective normal bundle of $Y_0$ in $Z$) which intersect transversally along $\mathbb{P}(N_{Y_0/Z_0})$ (the projective normal bundle of $Y_0$ in $Z_0$). Furthermore, if the normal crossing variety $X_0$ over ${\bf k}$ admits a smooth deformation over ${\bf W}_2$, then both pairs $(Bl_{Y_0}Z_0,\mathbb{P}(N_{Y_0/Z_0}))$ and $(\mathbb{P}(N_{Y_0/Z}),\mathbb{P}(N_{Y_0/Z_0}))$ are $W_2(k)$-liftable.
\end{prop}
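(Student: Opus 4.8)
The plan is to treat the statement in two stages: first a local analysis of the blowup that simultaneously yields the normal crossing structure of $X_0$ and, via Lemma \ref{dejong lemma}, the semi-stability of $X$; and then a direct application of Lemma \ref{lem_w2lifting} to extract the liftability of the two pairs.

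First I would fix local coordinates. Since $Z$ is smooth over $W$, its special fiber $Z_0$ is smooth over $k$, and $Y_0\subseteq Z_0$ is smooth, say of codimension $c$ in $Z_0$. Étale-locally (or after completion) one may write $Z\cong\spec W[x_1,\dots,x_n]$ with $Z_0=\{p=0\}$ and $Y_0=\{p=x_1=\cdots=x_c=0\}$. As both $Y_0$ and $Z$ are regular, $Y_0\hookrightarrow Z$ is a regular immersion with ideal $\sI_{Y_0}=(p,x_1,\dots,x_c)$, so the exceptional divisor of $X=Bl_{Y_0}Z$ is $\mathbb{P}(N_{Y_0/Z})$. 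I would then run through the blowup charts inside $Z\times\mathbb{P}^c$ with coordinates $[T_0:\cdots:T_c]$ for $[p:x_1:\cdots:x_c]$. In the chart $T_i\neq 0$ with $i\geq 1$ one finds $p=t_0x_i$ where $t_0=T_0/T_i$, so the total transform of $Z_0=\{p=0\}$ becomes $\{t_0x_i=0\}$: the exceptional divisor $E=\{x_i=0\}$ occurs with multiplicity one, while the strict transform $\{t_0=0\}$ is exactly $Bl_{Y_0}Z_0$. The two divisors are smooth and meet transversally, and globally the intersection $E\cap Bl_{Y_0}Z_0$ is the sub-bundle $\mathbb{P}(N_{Y_0/Z_0})\subseteq\mathbb{P}(N_{Y_0/Z})$ arising from the exact sequence $0\to N_{Y_0/Z_0}\to N_{Y_0/Z}\to N_{Z_0/Z}|_{Y_0}\to 0$ (in coordinates, the locus $T_0=0$ inside $E$). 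Thus $X_0=\pi^{\ast}Z_0$ is a reduced simple normal crossing divisor with the two asserted components.

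Semi-stability then follows from Lemma \ref{dejong lemma}. The center $Y_0$ lies in the special fiber, so the blowup is an isomorphism over the generic point and $X_{K_0}\cong Z_{K_0}$ is smooth over $K_0$; the special fiber was just shown to be a normal crossing variety over $k$. The main obstacle is precisely this blowup computation: one must track the multiplicity of the exceptional divisor carefully (to guarantee that $X_0$ is genuinely reduced and normal crossing rather than carrying an embedded or thickened component) and must match the intersection locus with $\mathbb{P}(N_{Y_0/Z_0})$. Once the local coordinates above are in place these verifications are routine, but the bookkeeping is the one place where something could go wrong.

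Finally, for the last assertion I would apply Lemma \ref{lem_w2lifting} to the two-component decomposition $X_0=A\cup B$ with $A=Bl_{Y_0}Z_0$ and $B=\mathbb{P}(N_{Y_0/Z})$. A smooth deformation $\sX$ over $\mathbf{W}_2$ then splits as $\underline{\sX}=\sX_A\cup\sX_B$, where $\sX_A$ and $\sX_B$ are $W_2$-liftings of $A$ and $B$ and the schematic intersection $\sX_A\cap\sX_B$ is a $W_2$-lifting of $A\cap B=\mathbb{P}(N_{Y_0/Z_0})$. Since $\sX_A\cap\sX_B$ is a closed subscheme of each component, the pairs $(\sX_A,\sX_A\cap\sX_B)$ and $(\sX_B,\sX_A\cap\sX_B)$ furnish exactly the required $W_2(k)$-liftings of $(Bl_{Y_0}Z_0,\mathbb{P}(N_{Y_0/Z_0}))$ and $(\mathbb{P}(N_{Y_0/Z}),\mathbb{P}(N_{Y_0/Z_0}))$, completing the argument.
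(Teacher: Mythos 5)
Your proposal is correct and follows essentially the same route as the paper: the paper likewise deduces semi-stability from Lemma \ref{dejong lemma} (declaring the blowup computation of the two components and their transversal intersection "standard", with a reference to Fulton, where you instead write it out in coordinates) and obtains the liftability of the two pairs directly from Lemma \ref{lem_w2lifting}. Your explicit chart computation, including the multiplicity-one check for the exceptional divisor and the identification of the intersection locus with $\mathbb{P}(N_{Y_0/Z_0})$, is exactly the omitted verification and is carried out correctly.
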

\begin{proof}
The first statement follows from Lemma \ref{dejong lemma} (the remaining fact is fairly standard and therefore omitted, see \cite{Fulton1998}). The second statement follows from Lemma \ref{lem_w2lifting}.
\end{proof}
\begin{prop}[Cynk-van Straten, \cite{Cynk2009} Theorem 3.1]\label{cynklemma}
	Let $\pi:Y\rightarrow X$ be a morphism of schemes over $k$ and let $S=\spec(A)$, where $A$ is artinian with residue field $k$. Assume that $\sO_X =\pi_\ast\sO_Y$ and $R^1\pi_\ast(\sO_Y)=0$. Then for every lifting $\sY\rightarrow S$ of $Y$ there exists a preferred lifting $\sX\rightarrow S$ making a commutative diagram
	$$\xymatrix{
		Y \ar@{^{(}->}[r]\ar[d] & \sY \ar[d] \\
		X \ar@{^{(}->}[r] & \sX
	}$$
\end{prop}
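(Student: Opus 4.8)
The natural strategy is to build $\sX$ by pushing forward the structure sheaf: set $\sO_{\sX}:=\pi_\ast\sO_{\sY}$ as a sheaf of $A$-algebras on the topological space $|X|$, and then verify that it is flat over $A$ with the correct central fiber $\sO_X$. Since $A$ is artinian local with residue field $k$, I would argue by induction on the length of $A$, reducing to the case of a small extension $0\to (t)\to A\to A_0\to 0$ with $(t)\cong k$ (so $\mathfrak m_A\cdot(t)=0$). Writing $\sY_0=\sY\times_S\spec(A_0)$, the inductive hypothesis applied to $\sY_0$ supplies a flat lifting $\sX_0$ of $X$ over $A_0$ with $\sO_{\sX_0}=\pi_\ast\sO_{\sY_0}$ and a compatible morphism $\sY_0\to\sX_0$.

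The key step is a cohomological computation. Flatness of $\sY$ over $A$ together with $(t)\cong k$ yields a short exact sequence of sheaves on $Y$,
$$0\to \sO_Y\xrightarrow{\ \cdot t\ }\sO_{\sY}\to \sO_{\sY_0}\to 0,$$
since the kernel of $\sO_{\sY}\to\sO_{\sY_0}$ is $t\sO_{\sY}\cong\sO_{\sY}\otimes_A(t)\cong\sO_Y$. Applying $\pi_\ast$ and invoking $R^1\pi_\ast\sO_Y=0$ to kill the connecting map, together with $\pi_\ast\sO_Y=\sO_X$, I obtain
$$0\to \sO_X\xrightarrow{\ \cdot t\ }\sO_{\sX}\to \sO_{\sX_0}\to 0,\qquad \sO_{\sX}:=\pi_\ast\sO_{\sY}.$$
This is the heart of the matter: the vanishing of $R^1\pi_\ast\sO_Y$ is exactly what makes $\pi_\ast$ preserve exactness, while $\pi_\ast\sO_Y=\sO_X$ identifies the kernel term correctly.

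From this sequence I read off the two required properties. Reducing modulo $\mathfrak m_A$ and using $\mathfrak m_A/(t)=\mathfrak m_{A_0}$ gives $\sO_{\sX}\otimes_A k=\sO_{\sX_0}\otimes_{A_0}k=\sO_X$, so the central fiber is correct. For flatness I apply the local criterion of flatness along the small extension: $\sO_{\sX}$ is $A$-flat provided $\sO_{\sX_0}=\sO_{\sX}\otimes_A A_0$ is $A_0$-flat (true by induction) and the natural map $(t)\otimes_A\sO_{\sX}\to\sO_{\sX}$ is injective. Under $(t)\cong k$ this map is precisely $\sO_X=\sO_{\sX}\otimes_A k\xrightarrow{\cdot t}\sO_{\sX}$, and its injectivity is the left-exactness of the displayed sequence; hence $\sO_{\sX}$ is flat over $A$.

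Finally I would check that $(|X|,\sO_{\sX})$ is genuinely a scheme rather than a mere ringed space: since $\mathfrak m_A$ is nilpotent, $\sO_{\sX}$ is a nilpotent thickening of the scheme $X=(|X|,\sO_X)$, and such thickenings of schemes are schemes. Thus $\sX$ is flat over $S$ and restricts to $X$ over $k$, i.e.\ it is a lifting. The morphism $\sY\to\sX$ lifting $\pi$ is induced by the adjunction counit $\pi^{-1}\sO_{\sX}=\pi^{-1}\pi_\ast\sO_{\sY}\to\sO_{\sY}$, and by construction it fits into the commutative square with the canonical closed immersions $X\hookrightarrow\sX$ and $Y\hookrightarrow\sY$; this canonical construction is the \emph{preferred} lifting. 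The main obstacle throughout is controlling the behavior of $\pi_\ast$ across the small-extension filtration, which is exactly where the hypotheses $\sO_X=\pi_\ast\sO_Y$ and $R^1\pi_\ast\sO_Y=0$ are indispensable.
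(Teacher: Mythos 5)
The paper offers no proof of this proposition at all: it is quoted verbatim from Cynk--van Straten (\cite{Cynk2009}, Theorem 3.1), so there is nothing internal to compare against. Your strategy --- set $\sO_{\sX}:=\pi_\ast\sO_{\sY}$, induct along small extensions $0\to(t)\to A\to A_0\to 0$, and use $R^1\pi_\ast\sO_Y=0$ to push forward $0\to\sO_Y\xrightarrow{\,t\,}\sO_{\sY}\to\sO_{\sY_0}\to 0$ --- is exactly the strategy of the cited source, and your displayed short exact sequence $0\to\sO_X\to\sO_{\sX}\to\sO_{\sX_0}\to 0$ is correct and is indeed the heart of the matter.

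There is, however, a gap in how you exploit it: you repeatedly identify pushforward with base change, and those identifications are not free. The kernel of $\sO_{\sX}\to\sO_{\sX_0}$ is $\pi_\ast(t\sO_{\sY})$ (left exactness of $\pi_\ast$), whereas $\sO_{\sX}\otimes_AA_0=\sO_{\sX}/t\sO_{\sX}$; a priori one only has $t\sO_{\sX}\subseteq\pi_\ast(t\sO_{\sY})$, and equality amounts to surjectivity of the restriction $\pi_\ast\sO_{\sY}\to\pi_\ast\sO_Y$, i.e.\ to $R^1\pi_\ast(\mathfrak{m}_A\sO_{\sY})=0$. Similarly, the injectivity needed for the local criterion requires $\ker(t:\sO_{\sX}\to\sO_{\sX})=\pi_\ast(\mathfrak{m}_A\sO_{\sY})$ to coincide with $\mathfrak{m}_A\sO_{\sX}$, and your computation of the central fiber ($\sO_{\sX}\otimes_Ak=\sO_X$) silently uses the same facts. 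All of these are true, but they need an extra d\'evissage that you have not performed: filter $\mathfrak{m}_A$ (or any finite $A$-module) by submodules with successive quotients $k$, tensor with the flat sheaf $\sO_{\sY}$, and apply $R^1\pi_\ast\sO_Y=0$ to each graded piece to conclude $R^1\pi_\ast(\sO_{\sY}\otimes_AM)=0$ and that $M\mapsto\pi_\ast(\sO_{\sY}\otimes_AM)$ is an exact functor; exactness then gives both that this functor equals $\pi_\ast(\sO_{\sY})\otimes_AM$ and that $\pi_\ast\sO_{\sY}$ is $A$-flat with fiber $\pi_\ast\sO_Y=\sO_X$. (The same surjectivity is also what makes $\sO_{\sX}\to\sO_X$ a nilpotent thickening, which you use to see that $\sX$ is a scheme.) With that supplement your argument closes up; the construction of $\sY\to\sX$ by adjunction and the commutativity of the square are fine as written.
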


\begin{cor}\label{prop_conterexample2}
Notation as in Proposition \ref{simple lemma}. If $Y_0$ is not $W_2(k)$-liftable, then the special fiber $X_0$ of $X$ (regarded as a log variety over ${\bf k}$) does not admit any smooth deformation over ${\bf W}_2$.
\end{cor}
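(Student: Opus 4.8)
The plan is to prove the contrapositive: assuming that the special fiber $X_0$ (as a log variety over $\mathbf{k}$) admits a smooth deformation over $\mathbf{W}_2$, I will deduce that $Y_0$ is $W_2(k)$-liftable, which is exactly the negation of the hypothesis. Granting this implication, the corollary follows at once. So suppose $X_0$ does admit such a smooth deformation. By the second assertion of Proposition \ref{simple lemma}, the pair $(\mathbb{P}(N_{Y_0/Z}),\mathbb{P}(N_{Y_0/Z_0}))$ is then $W_2(k)$-liftable; in particular the component $\mathbb{P}(N_{Y_0/Z})$ itself carries a lifting $\sP$ over $\spec(W_2(k))$. Only this component will be used below.

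The key step is to transfer the lifting $\sP$ down to $Y_0$ along the projective bundle structure. Let $q:\mathbb{P}(N_{Y_0/Z})\to Y_0$ denote the natural projection; it is a projective bundle whose fibers are projective spaces $\mathbb{P}^{r-1}$, with $r$ the rank of the normal bundle $N_{Y_0/Z}$. Since $H^0(\mathbb{P}^{r-1},\sO)=k$ and $H^i(\mathbb{P}^{r-1},\sO)=0$ for all $i>0$, the standard computation of the cohomology of the structure sheaf of a projective bundle gives $q_\ast\sO_{\mathbb{P}(N_{Y_0/Z})}=\sO_{Y_0}$ together with $R^1q_\ast\sO_{\mathbb{P}(N_{Y_0/Z})}=0$. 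Thus the hypotheses of Proposition \ref{cynklemma} are satisfied with $\pi=q$ and $S=\spec(W_2(k))$, which is artinian with residue field $k$. Applying that proposition to the lifting $\sP$ of $\mathbb{P}(N_{Y_0/Z})$ yields a preferred lifting $\sY\to S$ of the base $Y_0$, fitting into the commutative diagram of Proposition \ref{cynklemma}. Consequently $Y_0$ is $W_2(k)$-liftable, which contradicts the assumption of the corollary and completes the argument.

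I do not expect a genuine obstacle here: the entire content is to apply the Cynk--van Straten descent of liftings (Proposition \ref{cynklemma}) \emph{in the correct direction}, namely from the total space of the projective bundle $\mathbb{P}(N_{Y_0/Z})$ to its base $Y_0$, rather than from $Y_0$ to the bundle. The one point meriting care is the verification that $q_\ast\sO=\sO_{Y_0}$ and $R^1q_\ast\sO=0$, but this is the routine cohomological vanishing for the structure sheaf of a projective bundle and holds for every $r\geq 1$ (when $r=1$ the map $q$ is an isomorphism and the conclusion is immediate). Note also that it is crucial to use the projective bundle component $\mathbb{P}(N_{Y_0/Z})$ supplied by Proposition \ref{simple lemma}, since it is precisely this component that fibers over $Y_0$ with fibers of vanishing higher coherent cohomology.
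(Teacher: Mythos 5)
Your argument is correct and is exactly the paper's proof: the paper likewise combines Proposition \ref{simple lemma} (to get a $W_2$-lifting of the component $\mathbb{P}(N_{Y_0/Z})$) with Proposition \ref{cynklemma} applied to the projective bundle projection onto $Y_0$. You have merely spelled out the verification of the hypotheses $q_\ast\sO=\sO_{Y_0}$ and $R^1q_\ast\sO=0$, which the paper leaves implicit.
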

\begin{proof}
	Use Propositions \ref{simple lemma} and \ref{cynklemma}  which assert that the $W_2$-liftability of $\mathbb{P}(N_{Y_0/Z})$ implies that of $Y_0$.
\end{proof}

\subsection{Example 1}
Corollary \ref{prop_conterexample2} provides direct examples: take a smooth projective variety $Y_0$ over $k$ which is non $W_2$-liftable, and take a closed embedding $Y_0\hookrightarrow Z_0$ over $k$ into a smooth projective variety such that the codimension $\textrm{Cod}_{Z_0}Y_0\geq 2$ and $Z_0$ admits a smooth lifting $Z$ over $W$ (for example take $Z_0$ to be a projective space of high dimension). Set $X=\textrm{Bl}_{Y_0}Z$, the blowup of $Z$ along the closed subscheme $Y_0$.  Then $X$ is a semi-stable reduction over $W$ whose special fiber $X_0/{\bf k}$ does not admit ${\bf W}_2$-deformation.

\subsection{Example 2}
Notice that Mukai \cite{Mukai2013} has obtained a nice generalization to higher dimension of Raynaud's classical example \cite{Raynaud1978} of non $W_2$-liftable smooth projective surface over $k$. His construction, together with an idea of Liedtke-Satriano (Theorem 1.1 (a) \cite{LM2014}), allows us to make concrete examples of all relative dimensions $\geq 2$. \\

Let us recall first the following
\begin{defn}[\cite{Mukai2013}]\label{defn_Tango}
A smooth curve $C$ over $k$ of genus $\geq2$ is called a Tango-Raynaud curve if there exists a rational function $f$ on $C$ such that $df\neq0$ and that $(df)=pD$ for some ample divisor $D$.
\end{defn}
A typical example of Tango-Raynaud curve is the plane curve defined by the affine polynomial
$$G(x^p)-x=y^{pe-1},$$
where $G$ is a polynomial of degree $e\geq 1$ in the variable $x$. The following lemma is well known.
\begin{lem}[\cite{Mukai2013}]\label{lem_tango_curve}
Let $C$ be a Tango-Raynaud curve, then there exists a rank two vector bundle $E$ on $C$ together with a smooth curve $D$ in the projectification $\mathbb{P}_C(E)$ of $E/C$, such that the composite $D\rightarrow \mathbb{P}_C(E)\rightarrow C$ is the relative Frobenius $F_0: D\rightarrow D^{(p)}=C$.
\end{lem}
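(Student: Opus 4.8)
The plan is to convert the statement into a question about a \emph{Frobenius-destabilized} rank two bundle and to reduce the smoothness of $D$ to a single degree count. Write $F_0\colon D\to D^{(p)}=C$ for the relative Frobenius and recall that $g:=\mathrm{genus}(C)\ge2$ and $\deg A=(2g-2)/p$, where $(df)=pA$. The key reformulation is that giving a curve inside $\mathbb{P}_C(E)$ whose projection to $C$ is $F_0$ is the same as giving a rank one subbundle $M\subseteq F_0^*E$: such an $M$ is a section of $\mathbb{P}_D(F_0^*E)=D\times_C\mathbb{P}_C(E)\to D$, and composing with the projection $\mathbb{P}_D(F_0^*E)\to\mathbb{P}_C(E)$ produces a morphism $\gamma\colon D\to\mathbb{P}_C(E)$ with $\mathrm{pr}_C\circ\gamma=F_0$. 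So I must exhibit a rank two $E$ on $C$ and a subbundle $M\subset F_0^*E$ for which $\gamma$ is a closed immersion with smooth image.

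I would build $E$ from the fundamental exact sequence on $C$,
\[
0\to\sO_C\to F_{0*}\sO_D\xrightarrow{\ d\ }B\to0,\qquad B:=F_{0*}(d\sO_D)\subset F_{0*}\Omega^1_D.
\]
The Tango structure is precisely the datum of a line subbundle $N\hookrightarrow B$ of the maximal degree $(2g-2)/p$, cut out by $df$; by adjunction it is a nonzero map $F_0^*N\to\Omega^1_D$, and since $\deg F_0^*N=p\cdot(2g-2)/p=2g-2=\deg\Omega^1_D$ this map is an \emph{isomorphism} $F_0^*N\cong\Omega^1_D$. Pulling the displayed sequence back along $N\hookrightarrow B$ yields a rank two bundle $E$ on $C$ sitting in $0\to\sO_C\to E\to N\to0$. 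This extension is non-split: a splitting would produce a lift $N\to F_{0*}\sO_D$ of $N\hookrightarrow B$, i.e. a nonzero element of $\mathrm{Hom}(N,F_{0*}\sO_D)=H^0(D,F_0^*N^{-1})=H^0(D,(\Omega^1_D)^{-1})$, which vanishes because $g\ge2$.

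Next I would pull back by $F_0$. The counit $F_0^*F_{0*}\sO_D\to\sO_D$ is a retraction of the canonical inclusion $\sO_D\hookrightarrow F_0^*F_{0*}\sO_D$, so it splits the pulled-back sequence and gives $F_0^*E\cong\sO_D\oplus F_0^*N\cong\sO_D\oplus\Omega^1_D$. I take $M:=F_0^*N\cong\Omega^1_D$, the summand of top degree $2g-2=\deg F_0^*E$, which defines the morphism $\gamma\colon D\to\mathbb{P}_C(E)$ above with $\mathrm{pr}_C\circ\gamma=F_0$.

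The remaining and hardest point is that $\gamma$ is a closed immersion onto a smooth curve. Because $dF_0=0$, the derivative of $\gamma$ kills the horizontal direction, so $d\gamma$ equals, up to the isomorphism on fibres, the second fundamental form of $M\subseteq F_0^*E$ for the canonical (Cartier) connection, namely the $\sO_D$-linear map
\[
\varphi\colon M\to (F_0^*E/M)\otimes\Omega^1_D,\qquad \varphi\in H^0\!\big(D,(\Omega^1_D)^{-1}\otimes\sO_D\otimes\Omega^1_D\big)=H^0(D,\sO_D)=k.
\]
Thus the tuning $\deg A=(2g-2)/p$ forces $\varphi$ to be a \emph{constant}. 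It is nonzero: if $\varphi=0$ then $M$ would be horizontal, hence descend to a line subbundle $E'\subset E$ with $\deg E'=(2g-2)/p=\deg N$; as $E'$ cannot embed in $\sO_C$ by degree, the composite $E'\to N$ is a nonzero map of equal-degree line bundles, hence an isomorphism, which splits the sequence and contradicts non-splitness. A nonzero constant is nowhere vanishing, so $\gamma$ is everywhere an immersion; it is radicial because $F_0$ is and injective on points, hence a proper unramified monomorphism, hence a closed immersion. Its image is the desired smooth curve $D\subset\mathbb{P}_C(E)$ with $D\to\mathbb{P}_C(E)\to C$ equal to $F_0\colon D\to D^{(p)}=C$. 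The delicate step throughout is this smoothness, and the mechanism that makes it work is the degree bookkeeping pinning $\varphi$ in $H^0(\sO_D)$ together with $H^0(D,(\Omega^1_D)^{-1})=0$; both use the hypothesis $g\ge2$.
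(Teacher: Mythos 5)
The paper offers no argument for this lemma at all---it is imported wholesale from Mukai \cite{Mukai2013} with the remark that it is ``well known''---so the only meaningful comparison is with the classical construction in the cited sources. Your proof is correct and self-contained, and it takes the coordinate-free route through Frobenius-destabilized bundles rather than the explicit one. The chain of reductions is sound: the Tango datum $(df)=pA$ does give the maximal-degree line subbundle $N=\sO_C(A)\hookrightarrow B$ whose adjoint $F_0^*N\to\Omega^1_D$ is an isomorphism by the degree count $p\cdot\deg A=2g-2$; the extension $0\to\sO_C\to E\to N\to0$ pulled back from $0\to\sO_C\to F_{0*}\sO_D\to B\to0$ is non-split because $\mathrm{Hom}(N,F_{0*}\sO_D)=H^0(D,(\Omega^1_D)^{-1})=0$, yet splits after $F_0^*$ via the counit; and the summand $M=F_0^*N$ defines $\gamma:D\to\mathbb{P}_C(E)$ over $F_0$. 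The decisive smoothness step is correctly identified: since $dF_0=0$, the differential of $\gamma$ lands in the relative tangent bundle and is computed by the second fundamental form of $M$ for the Cartier connection, which your degree bookkeeping places in $H^0(D,\sO_D)=k$ and which is nonzero because a horizontal $M$ would Cartier-descend to a degree-$\deg N$ line subbundle of $E$ splitting the (non-split) extension; a nowhere-vanishing differential plus properness and radiciality then yields a closed immersion. By contrast, the treatments going back to Raynaud (and followed in spirit by Mukai) exhibit $D$ by explicit local equations built from $f$ inside the ruled surface and verify smoothness by direct computation. Your version buys conceptual clarity---it makes visible exactly where the equality $\deg A=(2g-2)/p$, rather than a mere inequality, is used---at the price of invoking two standard but nontrivial facts (the identification of the fibrewise differential of $\gamma$ with the second fundamental form, and Cartier descent for horizontal subbundles), both of which you use correctly.
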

\begin{prop}\label{examples of all dimensions}
Notation as in Lemma \ref{lem_tango_curve}. Let $\sC$ be a $W$-lifting of $C$ and $\sE$ a lifting of $E$ over $\sC$. For $d\geq 2$, set $Z_d=\mathbb{P}_{\sC}(\sE\oplus \sO_{\sC}^{d-2})$ and $X_d=Bl_{D}Z_d$. Then $X_d$ is a semi-stable reduction over $W$ of relative dimension $d$, whose special fiber, regarded as a log variety over $\bf k$, is non ${\bf W}_2$-liftable and therefore DR-indecomposable.
\end{prop}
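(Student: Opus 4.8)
The plan is to split the assertion into a geometric part, supplied almost entirely by Proposition~\ref{simple lemma}, and a deformation-theoretic part in which the Tango--Raynaud structure of $C$ manufactures a non-liftability obstruction; DR-indecomposability then follows formally from Theorem~\ref{thm_decom_lifting}.

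\emph{Geometry.} Since $\sC$ is smooth of relative dimension $1$ over $W$ and $\sE\oplus\sO_{\sC}^{d-2}$ has rank $d$, the bundle $Z_d=\mathbb{P}_{\sC}(\sE\oplus\sO_{\sC}^{d-2})$ is smooth and projective over $W$ of relative dimension $d$, with special fiber $(Z_d)_0=\mathbb{P}_C(E\oplus\sO_C^{d-2})$. The inclusion $E\hookrightarrow E\oplus\sO_C^{d-2}$ realizes $\mathbb{P}_C(E)$ as a sub-$\mathbb{P}^1$-bundle of $(Z_d)_0$ containing $D$, and $D$ is smooth of codimension $d-1$ there. Hence Proposition~\ref{simple lemma} applies verbatim: $X_d=Bl_DZ_d$ is a semi-stable reduction over $W$ of relative dimension $d$ (a blow-up does not change relative dimension), with special fiber a simple normal crossing divisor $A\cup B$, where $A=Bl_D(Z_d)_0$, $B=\mathbb{P}(N_{D/Z_d})$ and $A\cap B=\mathbb{P}(N_{D/(Z_d)_0})$.

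\emph{Reduction to a surface pair.} By Theorem~\ref{thm_decom_lifting} it suffices to prove that $(X_d)_0/\mathbf{k}$ has no smooth deformation over $\mathbf{W}_2$. Assume it has one. Proposition~\ref{simple lemma} then forces the pair $\big(A,A\cap B\big)$ to be $W_2(k)$-liftable, and I would descend this in two moves. First, the blow-down $\beta:A\to(Z_d)_0$ has $\beta_\ast\sO_A=\sO_{(Z_d)_0}$ and $R^1\beta_\ast\sO_A=0$ (its non-trivial fibres are $\mathbb{P}^{d-2}$'s), so a pair version of the Cynk--van~Straten argument (Proposition~\ref{cynklemma}) turns a lift of $\big(A,A\cap B\big)$ into a lift of the center pair $\big((Z_d)_0,D\big)$. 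Second, the $d-2$ trivial summands of $E\oplus\sO_C^{d-2}$ contribute only canonically liftable normal directions along $\mathbb{P}_C(E)\subset(Z_d)_0$, so that non-liftability of the surface pair $(\mathbb{P}_C(E),D)$ propagates to $\big((Z_d)_0,D\big)$; this is the device of Liedtke--Satriano (Theorem~1.1(a), \cite{LM2014}) promoting the two-dimensional example to all $d\ge2$ (for $d=2$ the reduction is vacuous, $A=\mathbb{P}_C(E)$ and $A\cap B=D$). Everything thus reduces to showing that $(\mathbb{P}_C(E),D)$ is not $W_2(k)$-liftable.

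\emph{The obstruction, and the main difficulty.} This last step, where the Tango--Raynaud hypothesis is indispensable, is the genuine obstacle. By Lemma~\ref{lem_tango_curve} the composite $D\hookrightarrow\mathbb{P}_C(E)\to C$ is the relative Frobenius $F_0:D\to D^{(p)}=C$, so $D$ is a degree-$p$ multisection cut out by data built from a rational function $f$ with $df\ne0$ and $(df)=pD'$, $D'$ ample (Definition~\ref{defn_Tango}). I would compute the obstruction to extending the embedded $D$ across a $W_2$-lift of $\mathbb{P}_C(E)$ and identify it, through the Cartier operator, with the class of $f$; the hypothesis $df\ne0$ makes this class nonzero, so the embedded curve admits no second-order lift. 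This is exactly the tension between the $p$-divisibility $(df)=pD'$ and a characteristic-zero deformation that underlies Raynaud's and Mukai's non-liftable surfaces \cite{Raynaud1978,Mukai2013}, and it is insensitive to both $p$ and $d$. Granting this computation---and verifying that the two reductions above transport the obstruction faithfully---$(X_d)_0$ is non-$\mathbf{W}_2$-liftable, and Theorem~\ref{thm_decom_lifting} yields that $\tau_{<p}F_{(X_d)_0/\mathbf{k}\ast}\Omega^{\bullet}_{(X_d)_0/\mathbf{k}}$ is indecomposable, i.e.\ $(X_d)_0$ is DR-indecomposable.
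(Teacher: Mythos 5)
Your geometric setup (Proposition~\ref{simple lemma} for the semi-stable structure, Lemma~\ref{lem_w2lifting} to extract a liftable pair from a hypothetical $\mathbf{W}_2$-deformation, Theorem~\ref{thm_decom_lifting} to convert non-liftability into DR-indecomposability) matches the paper. But the step you yourself flag as ``the genuine obstacle'' --- showing that the pair $(\mathbb{P}_C(E),D)$ admits no $W_2$-lift --- is exactly the step you do not carry out, and the mechanism you sketch for it is not the right one. You propose to compute the obstruction to lifting the embedded curve $D$ and ``identify it, through the Cartier operator, with the class of $f$,'' nonzero because $df\neq 0$. This cannot be the source of the contradiction: $df\neq0$ holds for any non-constant separable function on any curve, whereas the Tango--Raynaud condition enters only once, through Lemma~\ref{lem_tango_curve}, to manufacture a divisor $D\subset\mathbb{P}_C(E)$ whose projection to $C$ is the \emph{relative Frobenius}. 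The paper's actual argument at this point is short and does not mention the Cartier operator at all: apply Proposition~\ref{cynklemma} to the bundle projection $\mathbb{P}_C(E)\to C$ (which satisfies $\pi_\ast\sO=\sO$ and $R^1\pi_\ast\sO=0$) to lift the projection to a $W_2$-morphism $Z_1\to C_1$; composing with the lifted inclusion $Y_1\hookrightarrow Z_1$ produces a $W_2$-lift $F_1$ of the relative Frobenius $F_0:D\to C$. Since $dF_0=0$, the differential $dF_1$ is divisible by $p$, and $dF_1/p$ is a \emph{nonzero} map $F_0^\ast\Omega_{C}\to\Omega_{D}$ of line bundles of degrees $p(2g-2)$ and $2g-2$ respectively --- impossible for $g\geq2$. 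Without this (or an equivalent completed computation), your proof does not close.

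A secondary weakness: for $d\geq3$ you descend a lift of $(A,A\cap B)$ first to $((Z_d)_0,D)$ and then to the sub-bundle pair $(\mathbb{P}_C(E),D)$, asserting that the trivial summands ``contribute only canonically liftable normal directions.'' A lift of $(Z_d)_0$ need not respect the splitting $E\oplus\sO_C^{d-2}$, so a lift of the ambient pair does not obviously induce a lift of the sub-bundle pair; this propagation claim would need a real argument. It is also unnecessary: once you have a lift of $((Z_d)_0,D)$, a second application of Proposition~\ref{cynklemma} to the projection $(Z_d)_0\to C$ lifts the map to $C$ directly, and the Frobenius degree argument above applies verbatim in every relative dimension.
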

\begin{proof}
We prove the statement for $d=2$ only (the proof for $d\geq 3$ is the same). Denote
	$$
	C_0=C, \quad Y_0=D, \quad Z_0=\mathbb{P}_C(E), \quad Z=Z_2.
	$$
Assume the contrary that the special fiber $X_0$ of $Bl_{Y_0}Z$, regarded as a log variety over ${\bf k}$, admit a smooth deformation over ${\bf W}_2$. It follows from Proposition \ref{lem_w2lifting} that the pair $(Z_0, Y_0)$ consisting of the component $Z_0=Bl_{Y_0}Z_0$ of $X_0$ together with the divisor $Y_0=\mathbb{P}(N_{Y_0/Z})\cap Z_0\subset X_0$ lift to a pair $(Z_1,Y_1)$ over $W_2$ (The scheme $Z_1$ is not necessarily the mod $p^2$-reduction of $Z$). On the other hand, Proposition \ref{cynklemma} implies that the projection $Z_0\to C_0$ is the reduction of a certain $W_2$-morphism $Z_1\to C_1$. Therefore, the composite $F_0: Y_0\hookrightarrow Z_0\to C_0$ lifts to the composite $F_1: Y_1\hookrightarrow Z_1\to C_1$ over $W_2$. But this leads to a contradiction: the nonzero morphism $dF_1: F_1^*\Omega_{C_1}\to \Omega_{Y_1}$ is divisible by $p$ and it induces a nonzero morphism over $k$
	$$
	\frac{dF_1}{p}:F_0^*\Omega_{C_0}\to \Omega_{Y_0},
	$$
	which is impossible because of the degree. Therefore, $X_0/{\bf k}$ is indeed non ${\bf W}_2$-liftable as claimed.
\end{proof}

\section{An $E_1$-degeneration result}
This section is devoted to prove the following
\begin{thm}\label{thm_degeneration_E1}
Let $k$ be an algebraically closed field and $R$ a DVR with the residue field $k$. Let $Z/R$ be a smooth proper $R$-scheme and $X/R$ be a blow-up of $X$ along a closed regular center $Y_0$ supported in $Z_0=Z\times_Rk$. If the Hodge to de Rham spectral sequence
$$E_1^{pq}=H^q(Z_0,\Omega^p_{Z_0})\Rightarrow H^{p+q}(\Omega_{Z_0}^{\bullet})$$
degenerates at $E_1$ (e.g. when $\textrm{char}(k)=0$ or $\dim Z_0\leq \textrm{char}(k)$ and $R$ is of mixed characteristic), then the Hodge to log de Rham spectral sequence
$$E_1^{pq}=H^q(X_0,\wedge^p\Omega_{X_0}^{\log})\Rightarrow H^{p+q}(\Omega_{X_0}^{\log\bullet})$$
degenerates at $E_1$.
\end{thm}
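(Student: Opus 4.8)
The plan is to compare the Hodge to log de Rham spectral sequence of $X_0$ with the Hodge to de Rham spectral sequence of $Z_0$ via the blow-down morphism, and to reduce the whole statement to a single assertion about derived pushforwards of the logarithmic Hodge sheaves.

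First I would record the geometry. By Proposition \ref{simple lemma} (applied over the DVR $R$ in place of $W$), the special fiber is a simple normal crossing variety $X_0=A\cup B$ with smooth components $A=Bl_{Y_0}Z_0$ and $B=\P(N_{Y_0/Z})$ meeting transversally along the smooth divisor $D=\P(N_{Y_0/Z_0})$; here $D$ is at once the exceptional divisor of $\rho\colon A\to Z_0$ and a relative subbundle of the projection $B\to Y_0$. The blow-down maps glue to a proper morphism $\pi\colon X_0\to Z_0$, equal to $\rho$ on $A$ and to $B\to Y_0\hookrightarrow Z_0$ on $B$, which is an isomorphism over $Z_0\setminus Y_0$. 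Pulling back forms gives a morphism of complexes $\Omega^\bullet_{Z_0}\to R\pi_*\Omega^{\log\bullet}_{X_0}$ compatible with the stupid (Hodge) filtrations.

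The reduction rests on the following key quasi-isomorphisms, each concentrated in degree zero:
\begin{equation*}
\Omega^p_{Z_0}\xrightarrow{\ \sim\ }R\pi_*\bigl(\wedge^p\Omega^{\log}_{X_0}\bigr),\qquad p\ge 0.\tag{$\star$}
\end{equation*}
Granting $(\star)$, the map above is an isomorphism on each graded piece of the Hodge filtration, hence a filtered quasi-isomorphism; applying $R\Gamma(Z_0,-)$ and using $R\Gamma(Z_0,R\pi_*(-))=R\Gamma(X_0,-)$ identifies, up to filtered quasi-isomorphism, the filtered complex computing $\H^\bullet(Z_0,\Omega^\bullet_{Z_0})$ with the one computing $\H^\bullet(X_0,\Omega^{\log\bullet}_{X_0})$. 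Since the spectral sequence of a filtered complex depends (from the $E_1$-page on) only on its filtered quasi-isomorphism class, the Hodge to log de Rham spectral sequence of $X_0$ becomes isomorphic to the Hodge to de Rham spectral sequence of $Z_0$; the latter degenerates by hypothesis, so the former degenerates as well. As $Z$, hence $X$, is proper, all terms are finite-dimensional and there is no convergence issue. Note that the characteristic of $k$ (or the bound $\dim Z_0\le\mathrm{char}(k)$) enters only through this imported degeneration of $Z_0$; the statement $(\star)$ itself is characteristic-free.

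It remains to prove $(\star)$, which is the technical heart and the step I expect to be the main obstacle. Since $\pi$ is an isomorphism over $Z_0\setminus Y_0$ and $\wedge^p\Omega^{\log}_{X_0}$ agrees with $\Omega^p_{Z_0}$ there, this is a local question near $Y_0$. I would attack it through the residue short exact sequence of Hodge sheaves on the normal crossing variety,
\[
0\to\wedge^p\Omega^{\log}_{X_0}\to a_*\Omega^p_A(\log D)\oplus b_*\Omega^p_B(\log D)\to\iota_{D*}\bigl(\wedge^p\Omega^{\log}_{X_0}\big|_D\bigr)\to 0,
\]
where $a,b,\iota_D$ are the inclusions, the middle term uses log poles along the copy of $D$ in each component (the exceptional divisor in $A$, the subbundle in $B$), the right-hand map is the difference of restrictions to $D$, and $\wedge^p\Omega^{\log}_{X_0}|_D$ is an extension of $\Omega^p_D$ by $\Omega^{p-1}_D$ coming from the logarithmic normal direction. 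Applying $R\pi_*$ turns $(\star)$ into the claim that the distinguished triangle assembled from $R\rho_*\Omega^p_A(\log D)$ (a logarithmic blow-up of a smooth center), $R\pi_*\Omega^p_B(\log D)$ and $R\pi_*\bigl(\wedge^p\Omega^{\log}_{X_0}|_D\bigr)$ (projective-bundle computations over $Y_0$) has fiber exactly $\Omega^p_{Z_0}$. The real difficulty is bookkeeping the maps so that the excess contributions supported on $Y_0$ — the exceptional part on the $A$ side and the bundle cohomology of $B$ and $D$ — cancel against one another, leaving only $\Omega^p_{Z_0}$. All ingredients (the behaviour of $R\rho_*\Omega^p_A(\log D)$ for the blow-up of a smooth center and the Deligne projective-bundle decomposition for $B$ and $D$) are statements about coherent sheaves valid in any characteristic. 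I would organize the cancellation by first treating the case where $Y_0$ is a point, to isolate the bundle bookkeeping, and then the general smooth center by working fiberwise over $Y_0$.
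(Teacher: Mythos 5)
Your proposal follows essentially the same route as the paper: the reduction of the theorem to the single assertion $(\star)$ that the canonical map $\Omega^p_{Z_0}\to R\pi_*\wedge^p\Omega^{\log}_{X_0}$ is an isomorphism concentrated in degree zero is exactly the paper's Proposition \ref{prop_KEY}, and your filtered-quasi-isomorphism argument for deducing the degeneration is the paper's proof of Theorem \ref{thm_degeneration_E1}. Your sketched attack on $(\star)$ --- the residue/normalization exact sequence for $\wedge^p\Omega^{\log}_{X_0}$, the projective-bundle computations over $Y_0$, and the comparison with $R\rho_*\Omega^p_A$ for the blow-up --- is also the paper's strategy (Lemmas \ref{lem_log_cotangent_normalization}, \ref{lem_proj_bundle}, \ref{lem_blowup}, assembled by a $3\times 3$-lemma diagram chase), so the only part you leave open is precisely that bookkeeping, which the paper carries out in the proof of Proposition \ref{prop_KEY}.
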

Recall from Proposition \ref{simple lemma} that $X_0$ is a simple normal crossing divisor consisting of two smooth components $X_1=Bl_{Y_0}Z_0$ and $X_2=\mathbb{P}(N_{Y_0/Z})$ which intersect transversally along $D=\mathbb{P}(N_{Y_0/Z_0})$. The blowdown morphism of the log pairs $(Z,Z_0)\rightarrow(X,X_0)$ restricts on the special fiber to a log morphism $\pi:X_0\rightarrow (Z_0,1\mapsto 0)$ between log varieties over $(\spec(k),1\mapsto0)$. This induces a canonical morphism
$$\pi^{\ast i}:\Omega^i_{Z_0}\rightarrow R\pi_\ast\bigwedge^i\Omega_{X_0}^{\textrm{log}}.$$
Our main technical step in proving Theorem \ref{thm_degeneration_E1} is the following
\begin{prop}\label{prop_KEY}
Let $Z/R$ be a smooth proper $R$-scheme and $X/R$ be a blow-up of $X$ along a closed regular center $Y_0$ supported in $Z_0$. Denote by $\pi:X_0\rightarrow Z_0$ the restriction morphism. Then for each $i$ the canonical morphism (defined in the proof)
$$\Omega_{Z_0}^i \rightarrow R\pi_\ast\wedge^i\Omega_{X_0}^{\textrm{log}}$$
is an isomorphism in $D^b(Z_0)$.
\end{prop}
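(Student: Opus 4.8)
\emph{The plan.} The statement is local on $Z_0$, and away from $Y_0$ the morphism $\pi$ is an isomorphism onto a smooth open locus of $X_0$ where the log structure is trivial, so there the asserted map is patently an isomorphism. I would therefore build the canonical map and show that its cone, which is supported on $Y_0$, is acyclic. The map is the one adjoint to the pullback $\pi^\ast\Omega^i_{Z_0}\to\wedge^i\Omega^{\log}_{X_0}$ of log differentials along $\pi\colon X_0\to(Z_0,1\mapsto0)$ (functoriality of log Kähler differentials, noting $\Omega^1_{(Z_0,1\mapsto0)/\mathbf{k}}=\Omega^1_{Z_0}$); the case $i=0$ reduces to $R\pi_\ast\sO_{X_0}=\sO_{Z_0}$. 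Since $\pi$ is proper, by the theorem on formal functions it suffices to test acyclicity of the cone along $Y_0$, and via the deformation to the normal cone one may even assume $Z_0$ is the total space of $N_{Y_0/Z_0}$ with $Y_0$ its zero section, so that every pushforward becomes a cohomology computation on the fibers of the projective bundles over $Y_0$.

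\emph{Reduction to three pushforwards.} I would carry out the computation through the Mayer--Vietoris sequence of the decomposition $X_0=X_1\cup_D X_2$. As $\wedge^i\Omega^{\log}_{X_0}$ is locally free and $0\to\sO_{X_0}\to\sO_{X_1}\oplus\sO_{X_2}\to\sO_D\to0$ is exact, tensoring gives
\[
0\to\wedge^i\Omega^{\log}_{X_0}\to\Omega^i_{X_1}(\log D)\oplus\Omega^i_{X_2}(\log D)\to\wedge^i\sE\to0,
\]
where I use $\wedge^i\Omega^{\log}_{X_0}|_{X_j}\cong\Omega^i_{X_j}(\log D)$ and write $\sE:=\Omega^{\log}_{X_0}|_D$ for the restriction of the degree-one log differentials. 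The heart of the matter is that $\sE$ is a \emph{non-split} extension $0\to\Omega^1_D\to\sE\to\sO_D\to0$, with class the first Chern class of $N_{D/X_1}$; restricted to a fiber $\mathbb{P}^{c-1}$ of $\pi_D$ (where $c=\operatorname{codim}_{Z_0}Y_0$) this is a nonzero multiple of the Euler class, so $\sE$ is fiberwise isomorphic to $\sO(-1)^{\oplus c}$ and hence $\wedge^i\sE$ to $\sO(-i)^{\oplus\binom{c}{i}}$.

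\emph{Computing the pieces.} Each of the three pushforwards is now accessible. For $X_2=\mathbb{P}(N_{Y_0/Z})\to Y_0$ the complement $X_2\smallsetminus D$ is the affine bundle of $N_{Y_0/Z_0}$, whose relative log-de Rham cohomology reduces to that of the base, giving $R\pi_{2\ast}\Omega^i_{X_2}(\log D)\cong a_{Y_0\ast}\Omega^i_{Y_0}$ in degree $0$. For the $\mathbb{P}^{c-1}$-bundle $\pi_D$, the model $\sO(-1)^{\oplus c}$ shows that $R\pi_{D\ast}\wedge^i\sE$ is supported on $Y_0$ with contributions only in relative degrees $0$ and $c$, that is, an $a_{Y_0\ast}\Omega^i_{Y_0}$ in cohomological degree $0$ and a term involving $\Omega^{i-c}_{Y_0}$ in degree $c-1$. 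Finally, for the blow-up $\pi_1\colon X_1\to Z_0$ the log blow-up formula produces $\Omega^i_{Z_0}$ in degree $0$ together with a $Y_0$-supported error appearing in degree $c-1$ of the same shape as the one just found on $D$.

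\emph{Assembly and the main obstacle.} Feeding these into the Mayer--Vietoris long exact sequence, all terms supported on $Y_0$ occur in matching pairs that cancel: the degree-zero $a_{Y_0\ast}\Omega^i_{Y_0}$ from $X_2$ cancels the degree-zero part of $R\pi_{D\ast}\wedge^i\sE$ through the restriction $X_2\to D$, while the degree-$(c-1)$ error of the blow-up cancels the degree-$(c-1)$ part of $R\pi_{D\ast}\wedge^i\sE$ through the restriction $X_1\to D$. What survives is exactly $\Omega^i_{Z_0}$ in degree $0$, and by construction the surviving isomorphism is the canonical map. The main obstacle is to show that these cancellations are \emph{exact} rather than merely numerical, i.e.\ that the two relevant connecting (residue/Gysin) homomorphisms are isomorphisms; I expect to reduce this, after the normal-cone reduction, to the statement that the self-intersection and Euler classes governing $D$ inside $X_1$ and $X_2$ are units, for which the explicit $\sO(-1)^{\oplus c}$ model is tailor-made.
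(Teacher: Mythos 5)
Your overall architecture coincides with the paper's: the normalization (Mayer--Vietoris) sequence $0\to\wedge^i\Omega^{\log}_{X_0}\to\Omega^i_{X_1}(\log D)\oplus\Omega^i_{X_2}(\log D)\to\wedge^i\sE\to0$ is the paper's Lemma \ref{lem_log_cotangent_normalization} (with a different identification of the cokernel), the computation of $R\pi_{2\ast}\Omega^i_{X_2}(\log D)$ is Lemma \ref{lem_proj_bundle}, and the final cancellation against the blow-up is what the paper's $3\times 3$ argument accomplishes. Your observation that $\sE=\Omega^{\log}_{X_0}|_D$ is a non-split extension of $\sO_D$ by $\Omega^1_D$ with class $\pm c_1(N_{D/X_1})$, hence fiberwise isomorphic to $\sO(-1)^{\oplus c}\oplus\sO^{\oplus \dim Y_0}$ (you dropped the trivial summand coming from the $Y_0$-directions, so $\wedge^i\sE$ is not just $\sO(-i)^{\oplus\binom{c}{i}}$ on fibers), is correct and in fact more careful than the paper's identification of the cokernel with $\Omega^i_D\oplus\Omega^{i-1}_D$, which is only a local splitting; what is canonical is the residue filtration $0\to\Omega^i_D\to\wedge^i\sE\to\Omega^{i-1}_D\to0$.

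As a proof, however, the proposal has genuine gaps, and they sit exactly where the content is. (1) The ``log blow-up formula'' $R\pi_{1\ast}\Omega^i_{X_1}(\log D)\simeq\Omega^i_{Z_0}\oplus(\textrm{error in degree } c-1)$ is asserted, not proved; it is not an off-the-shelf statement and is the technical heart of your route. (2) You yourself flag that the two cancellations must be shown to be \emph{exact}, i.e.\ that two specific connecting/restriction maps are isomorphisms, and you only indicate how you would hope to reduce this; nothing is verified. (3) Even granting the cancellations, you still must show that the surviving degree-zero isomorphism is the canonical map adjoint to $\pi^{\ast}$; the paper devotes a separate snake-lemma step on $H^0$ to exactly this point, because the quasi-isomorphism produced by derived-category diagram chases is a priori non-canonical. (4) The reduction via deformation to the normal cone is neither justified (coherent pushforwards do not specialize without a base-change/flatness argument, and the ambient $Z$ lives over $R$) nor needed. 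The paper's proof avoids (1) and most of (2): using the residue filtration on the cokernel, it cancels the $\Omega^{i-1}_D$ piece against the residue triangle of $\Omega^i_{X_1}(\log D)$ by the $3\times3$ lemma, thereby reducing everything to the classical triangle $\Omega^k_{Z_0}\to R\pi_\ast\Omega^k_{X_1}\oplus\Omega^k_{Y_0}\to R\pi_\ast\Omega^k_D\to{}$ of Lemma \ref{lem_blowup}, which is then proved by elementary filtration and spectral-sequence arguments together with Bott vanishing (Lemma \ref{lem_Bott}). I recommend adopting that reduction: it replaces your two unproved pushforward computations by a single standard one.
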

From Proposition \ref{prop_KEY}, we may derive the main result of the section.
\begin{proof}[Proof of Theorem \ref{thm_degeneration_E1}]
We actually prove that the two spectral sequences
\begin{align}\label{align_derham_1}
E_1^{pq}=H^q(Z_0,\Omega^p_{Z_0})\Rightarrow H^{p+q}(\Omega_{Z_0}^{\bullet})
\end{align}
and
\begin{align}\label{align_derham_2}
E_1^{pq}=H^q(X_0,\wedge^p\Omega_{X_0}^{\log})\Rightarrow H^{p+q}(\Omega_{X_0}^{\log\bullet})
\end{align}
are isomorphic. First recall that (\ref{align_derham_1}) is induced by the hypercohomology of the complex
$\Omega^{\bullet}_{Z_0}$
with respect to the truncated filtration
$$F^i=\tau^{\textrm{st}}_{\geq i}\Omega^{\bullet}_{Z_0},$$
where $\tau^{\textrm{st}}$ is the stupid truncation.
(\ref{align_derham_2}) is induced by the hypercohomology of the complex
$\Omega_{X_0}^{\textrm{log}\bullet}$
with respect to the truncated filtration
$$F^i=\tau^{\textrm{st}}_{\geq i}\Omega_{X_0}^{\textrm{log}\bullet}.$$
By Proposition \ref{prop_KEY}, there are natural quasi-isomorphisms
$$R\pi_{\ast}\Omega_{X_0}^{\log\bullet}\simeq\pi_{\ast}\Omega_{X_0}^{\log\bullet}\simeq\Omega^{\bullet}_{Z_0},$$
and the isomorphisms respect the filtration
$$F^i=R\pi_{\ast}\tau^{\textrm{st}}_{\geq i}\Omega_{X_0}^{\textrm{log}\bullet}\simeq\pi_{\ast}\tau^{\textrm{st}}_{\geq i}\Omega_{X_0}^{\textrm{log}\bullet}$$
in the left, middle and
$$F^i=\tau^{\textrm{st}}_{\geq i}\Omega^{\bullet}_{Z_0}$$
in the right. As a consequence, the two spectral sequences (\ref{align_derham_1}) and (\ref{align_derham_2}) are naturally isomorphic.
\end{proof}
To prove Proposition \ref{prop_KEY}, we make some preparations. Let $X_0=X_1\cup_DX_2$ be a variety consisting of two smooth projective components $X_1$ and $X_2$ such that they intersect transversely along a smooth divisor $D$. 
Assume that $X_0$ has a log structure of semi-stable type (Definition \ref{defn_semistable_type}). Then the normalization $X_1\cup X_2\rightarrow \underline{X_0}$ and the diagonal immersion $D\rightarrow X_1\cup X_2$ lift to log morphisms
$$(X_1\cup X_2, D_1\cup D_2\oplus(1\mapsto 0))\rightarrow X_0$$
and
$$(D,(1\mapsto 0)^{\oplus 2})\rightarrow (X_1\cup X_2, D_1\cup D_2)$$
over the base $(\spec(k),1\mapsto 0)$. These log morphisms induce morphisms of sheaves on $X_0$
\begin{align}\label{align_log_diff1}
\Omega^i_{X_0^{\textrm{log}}}\rightarrow \Omega_{X_1}^i(\textrm{log}D)\oplus \Omega_{X_2}^i(\textrm{log}D)
\end{align}
and
\begin{align}\label{align_log_diff2}
\Omega_{X_1}^i(\textrm{log}D)\oplus \Omega_{X_2}^k(\textrm{log}D)\rightarrow\Omega^k_{(D,(1\mapsto 0)^{\oplus 2})/(\spec(k),1\mapsto 0)}.
\end{align}
for each $i$. By the definition of log cotangent sheaf,
$$\Omega_{(D,(1\mapsto 0)^{\oplus 2})/(\spec(k),1\mapsto 0)}\simeq\Omega_D\oplus (\mathbb{Z}^{\oplus 2}/\mathbb{Z}\otimes_{\mathbb{Z}}\sO_{D})/\alpha(m)\otimes m-d\alpha(m)\otimes 1.$$
Thanks to the log structure of $(D,(1\mapsto 0)^{\oplus 2})$, $\alpha(m)\otimes m-d\alpha(m)\otimes 1$ are null relations. Therefore
$$\Omega_{(D,(1\mapsto 0)^{\oplus 2})/(\spec(k),1\mapsto 0)}\simeq\Omega_D\oplus \sO_D.$$
This isomorphism induces the forgetful morphism
$$\Omega_{(D,(1\mapsto 0)^{\oplus 2})/(\spec(k),1\mapsto 0)}\rightarrow \Omega_D$$
and the log residue morphism
$$\Omega_{(D,(1\mapsto 0)^{\oplus 2})/(\spec(k),1\mapsto 0)}\rightarrow \sO_D.$$
Therefore
$$\Omega^k_{(D,(1\mapsto 0)^{\oplus 2})/(\spec(k),1\mapsto 0)}\cong \bigwedge^k(\Omega_D\oplus \sO_D)\simeq \Omega_D^k\oplus \Omega_D^{k-1}$$
and by local calculation the restriction morphism
$$\Omega_{X_1}^k(\textrm{log}D)\rightarrow\Omega^k_{(D,(1\mapsto 0)^{\oplus 2})/(\spec(k),1\mapsto 0)}$$
is equivalent to
$$(\iota,\textrm{res}_D):\Omega_{X_1}^k(\textrm{log}D)\rightarrow \Omega_D^k\oplus \Omega_D^{k-1}$$
$$\beta+\gamma \frac{dz}{z}\mapsto (\beta,\gamma).$$
Here we use a local chart of $X_1$ where $D=\{z=0\}$ and $\beta$, $\gamma$ does not contain $dz$.
This phenomenon is interesting in itself. The residue map $\textrm{res}_D$ is a part of the restriction map of log cotangent sheaves. It makes log geometry a convenient and natural language in such a situation.\\

Assume locally $X_0$ is embedded into the affine space with a system of local coordinates $(z_1,z_2,\cdots,z_n)$ such that $X_1=\{z_1=0\}$, $X_2=\{z_2=0\}$. Since $$\frac{dz_1}{z_1}+\frac{dz_2}{z_2}=0$$ on $X_0$, a log form on $X_0$ is of the form
$$\beta+\gamma_1\frac{dz_1}{z_1}=\beta-\gamma_1\frac{dz_2}{z_2}.$$
Therefore two $k$-forms $\beta_1+\gamma_1\frac{dz_1}{z_1}$ on $X_2$ and $\beta_2+\gamma_2\frac{dz_2}{z_2}$ on $X_2$ glue to a log $k$-form on $X_0$ if and only if
$$\beta_1|_D=\beta_2|_D$$
and
$$\gamma_1|_D+\gamma_2|_D=0.$$
This proves
\begin{lem}\label{lem_log_cotangent_normalization}
For each $k\geq 0$ there is a short exact sequence of sheaves
$$0\rightarrow\Omega_{X^{\textrm{log}}}^k\rightarrow \Omega_{X_1}^k(\textrm{log}D)\oplus \Omega_{X_2}^k(\textrm{log}D)\stackrel{\varphi}{\rightarrow} \Omega_D^{i}\oplus \Omega_D^{i-1}\rightarrow 0$$
where $\varphi$ is defined by
$$\begin{pmatrix}
\iota & \textrm{res}_D \\
-\iota & \textrm{res}_D
\end{pmatrix}.$$
Here $\Omega_D^{-1}$ is defined to be $0$.
\end{lem}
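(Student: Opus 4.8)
The plan is to check exactness \'etale-locally on $\underline{X_0}$: since all three terms are coherent sheaves, exactness of the sequence may be verified on stalks, and the local gluing computation carried out just above the statement already supplies the essential input. All the arrows are the intrinsic ones coming from the log morphisms of the normalization $X_1\cup X_2\to\underline{X_0}$ and of the diagonal $D\to X_1\cup X_2$, so coordinate-independence is automatic; what remains is to read off the three exactness assertions from the local model.

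I would fix the standard local chart in which $\underline{X_0}=\{z_1z_2=0\}\subset\mathbb{A}^n_k$ with $X_1=\{z_1=0\}$, $X_2=\{z_2=0\}$ and $D=\{z_1=z_2=0\}$, so that $\Omega^\bullet_{X^{\textrm{log}}}$ is generated over $\sO_{X_0}$ by $dz_3,\dots,dz_n$ together with $\frac{dz_1}{z_1},\frac{dz_2}{z_2}$ modulo the single relation $\frac{dz_1}{z_1}+\frac{dz_2}{z_2}=0$. For left-exactness (injectivity of the first arrow) I would use that the restriction $\sO_{X_0}\to\sO_{X_1}\oplus\sO_{X_2}$ to the normalization is injective, so a log $k$-form restricting to $0$ on both branches already vanishes. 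That the composite of the two arrows is zero is exactly the statement that the two branch-restrictions $\beta_i+\gamma_i\frac{dz_i}{z_i}$ of a global log form satisfy $\beta_1|_D=\beta_2|_D$ and $\gamma_1|_D+\gamma_2|_D=0$; writing the two components of $\varphi$ as the difference $\iota\omega_1-\iota\omega_2$ of the forgetful restrictions and the sum $\textrm{res}_D\omega_1+\textrm{res}_D\omega_2$ of the residues, this is precisely the vanishing of $\varphi$ composed with the first arrow.

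For exactness in the middle I would run the gluing computation in reverse: a pair $(\omega_1,\omega_2)\in\ker\varphi$ satisfies $\iota\omega_1=\iota\omega_2$ and $\textrm{res}_D\omega_1=-\textrm{res}_D\omega_2$, which by the local presentation above is exactly the condition for $\omega_1,\omega_2$ to be the two branch-restrictions of a single section of $\Omega^k_{X^{\textrm{log}}}$; this produces the desired preimage and identifies $\ker\varphi$ with the image of the first arrow. For surjectivity of $\varphi$ I would exhibit explicit preimages: $\iota:\Omega^k_{X_1}(\textrm{log}D)\to\Omega^k_D$ is surjective because restriction of forms along the smooth divisor $D\subset X_1$ is, and $\textrm{res}_D:\Omega^k_{X_1}(\textrm{log}D)\to\Omega^{k-1}_D$ is surjective since any $\xi\in\Omega^{k-1}_D$ extends to some $\tilde\xi$ on $X_1$ and $\tilde\xi\frac{dz_1}{z_1}$ has residue $\xi$; then choosing $\omega_1$ with $\iota\omega_1=\eta$ and $\textrm{res}_D\omega_1=\xi$ and setting $\omega_2=0$ gives $\varphi(\omega_1,0)=(\eta,\xi)$ for an arbitrary target $(\eta,\xi)\in\Omega^k_D\oplus\Omega^{k-1}_D$. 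The only real care needed is bookkeeping: keeping the signs in $\varphi$ compatible with the two gluing relations and with the identification $\Omega^k_{(D,(1\mapsto 0)^{\oplus 2})}\simeq\Omega^k_D\oplus\Omega^{k-1}_D$ established above, and confirming that the locally constructed sequence is the globalization of the intrinsic maps. None of the individual steps is hard; the mild obstacle is simply to carry the local model through all three assertions consistently and match the sign conventions.
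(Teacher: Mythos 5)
Your proposal is correct and follows essentially the same route as the paper: the paper's own argument is precisely the local-coordinate gluing computation in the chart $X_1=\{z_1=0\}$, $X_2=\{z_2=0\}$ with the relation $\frac{dz_1}{z_1}+\frac{dz_2}{z_2}=0$, which identifies sections of $\Omega^k_{X^{\textrm{log}}}$ with pairs of log forms on the branches satisfying $\beta_1|_D=\beta_2|_D$ and $\gamma_1|_D+\gamma_2|_D=0$. Your write-up merely makes explicit the injectivity (via flatness of $\Omega^k_{X^{\textrm{log}}}$ and injectivity of $\sO_{X_0}\to\sO_{X_1}\oplus\sO_{X_2}$) and the local surjectivity of $\varphi$ (via $\varphi(\omega_1,0)$), which the paper leaves implicit.
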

The following well-known lemma will be used several times in the sequel.
\begin{lem}\label{lem_Bott}
Let $\mathbb{P}^n$ be the projective space over $k$. The following vanishing results hold:
\begin{enumerate}
  \item
  \begin{align*}
  H^q(\mathbb{P}^n,\Omega_{\mathbb{P}^n}^p)=0,  p\neq q
  \end{align*}
  \item If $i\neq0$, then
  \begin{align*}
  H^q(\mathbb{P}^n,\Omega_{\mathbb{P}^n}^p(i))=0,
  \end{align*}
  for $q=0$, $i\leq p$ or $q=n$, $i\geq p-n$ or $q\neq0,n$.
  \item Let $H$ be a hyperplane in $\mathbb{P}^n$, then
  \begin{align*}
  H^q(\mathbb{P}^n,\Omega_{\mathbb{P}^n}^p(\textrm{log}H))=
  \begin{cases}
  k, & p=q=0 \\
  0, & \textrm{otherwise}.
  \end{cases}
  \end{align*}
\end{enumerate}
\end{lem}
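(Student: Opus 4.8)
The three statements are the classical Bott formulas together with the residue computation for a logarithmic divisor, and I would organize the argument so that (1) and (2) are handled uniformly by the Euler sequence while (3) is reduced to (1). For (1) and (2) the plan is to exploit the $p$-th exterior power of the Euler sequence: taking $\wedge^p$ of $0\to\Omega^1_{\mathbb{P}^n}\to\sO(-1)^{\oplus(n+1)}\to\sO\to0$ and twisting by $\sO(i)$ gives the short exact sequences
$$0\to\Omega^p_{\mathbb{P}^n}(i)\to\sO(i-p)^{\oplus\binom{n+1}{p}}\to\Omega^{p-1}_{\mathbb{P}^n}(i)\to0.$$
Feeding these into the long exact cohomology sequence and using only the elementary cohomology of line bundles on $\mathbb{P}^n$ (namely $H^0(\sO(m))\ne0\iff m\ge0$, $H^n(\sO(m))\ne0\iff m\le-n-1$, and $H^q(\sO(m))=0$ for $0<q<n$), one proves the stated vanishing by induction on $p$. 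The case $i=0$ yields (1): the middle cohomologies vanish, the boundary maps become isomorphisms, and one is left with $H^q(\Omega^p)=k$ exactly when $p=q$. For $i\ne0$ the same bookkeeping, split according to $q=0$, to $q=n$ (via Serre duality $H^n(\Omega^p(i))^\vee\cong H^0(\Omega^{n-p}(-i))$), and to $0<q<n$, produces the three vanishing ranges of (2).

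For (3) I would use the residue exact sequence attached to the smooth hyperplane $i\colon H\hookrightarrow\mathbb{P}^n$,
$$0\to\Omega^p_{\mathbb{P}^n}\to\Omega^p_{\mathbb{P}^n}(\log H)\xrightarrow{\ \mathrm{res}\ }\Omega^{p-1}_H\to0,$$
together with part (1) applied on $\mathbb{P}^n$ and on $H\cong\mathbb{P}^{n-1}$. Since $H^q(\mathbb{P}^n,\Omega^p)$ and $H^q(H,\Omega^{p-1}_H)$ are nonzero only for $q=p$ and for $q=p-1$ respectively, the long exact sequence shows that the only groups $H^q(\Omega^p(\log H))$ that can be nonzero lie in the two bands $p=q$ and $p=q+1$, and that in both bands the vanishing is governed entirely by the single connecting map
$$\delta\colon H^{p-1}(H,\Omega^{p-1}_H)\longrightarrow H^p(\mathbb{P}^n,\Omega^p_{\mathbb{P}^n}).$$
The case $p=0$ is immediate since $\Omega^0(\log H)=\sO_{\mathbb{P}^n}$, which accounts for the single copy of $k$ at $(0,0)$.

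The crux, and the step I expect to be the main obstacle, is to show that $\delta$ is an isomorphism: a priori it is merely a map between one-dimensional $k$-vector spaces, so the real content is that it does not vanish. I would identify $\delta$, up to sign, with the Gysin map $i_*$ appearing in the residue sequence, and then compute it via the cycle-class formalism. Writing $h=c_1(\sO(1))\in H^1(\mathbb{P}^n,\Omega^1)$, the Hodge cohomology ring $\bigoplus_p H^p(\mathbb{P}^n,\Omega^p)$ is generated by $h$ with $H^p(\Omega^p)=k\cdot h^p$, and $c_1(\sO_H(1))^{p-1}=i^*h^{p-1}$ generates $H^{p-1}(H,\Omega^{p-1}_H)$; the projection formula then gives $i_*(i^*h^{p-1})=h^{p-1}\cup i_*(1)=h^{p-1}\cup h=h^p$, so $\delta$ sends a generator to a generator and is an isomorphism, in any characteristic. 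Once $\delta$ is known to be an isomorphism, the long exact sequence forces $H^q(\Omega^p(\log H))=0$ for all $(p,q)\ne(0,0)$, which completes (3). The one genuinely delicate point is the compatibility of the boundary map of the residue sequence with the Gysin (cycle-class) map; everything else is the standard Euler-sequence bookkeeping.
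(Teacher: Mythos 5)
The paper does not actually prove this lemma: it is stated as ``well-known'' and used as a black box in Lemmas \ref{lem_proj_bundle} and \ref{lem_blowup}, so there is no in-paper argument to compare yours against. On its own merits your proposal is correct and is the classical route. For (1) and (2) the induction through the exterior powers of the Euler sequence is the standard characteristic-free proof of Bott's formulas; the only places where ``bookkeeping'' undersells the work are the borderline cases, e.g.\ $q=0$, $i=p$ (where the middle term $H^0(\sO(i-p))=H^0(\sO)$ does not vanish) and $q=1$, $i\geq p$ (where one needs surjectivity of $H^0(\sO(i-p))^{\oplus\binom{n+1}{p}}\to H^0(\Omega^{p-1}_{\mathbb{P}^n}(i))$). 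There a single short exact sequence does not suffice; one should either splice them into the full Koszul resolution $0\to\Omega^p_{\mathbb{P}^n}(i)\to\sO(i-p)^{\oplus\binom{n+1}{p}}\to\sO(i-p+1)^{\oplus\binom{n+1}{p-1}}\to\cdots\to\sO(i)\to0$ and use that taking global sections of this complex stays exact (exactness of the Koszul complex of $x_0,\dots,x_n$ on the polynomial ring), or invoke Serre duality to reduce the number of cases, as you indicate. For (3), your reduction to the nonvanishing of $\delta\colon H^{p-1}(H,\Omega^{p-1}_H)\to H^p(\mathbb{P}^n,\Omega^p_{\mathbb{P}^n})$ is correct, and the delicate point you flag --- that $\delta$ is the Gysin map --- is the standard compatibility of the extension class of the residue sequence with $c_1(\sO(H))$, checked by a \v{C}ech computation with $\dlog$ of transition functions for $p=1$ and extended by multiplicativity; this holds in any characteristic. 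You can, however, bypass the Gysin formalism entirely: for a single hyperplane one has $T_{\mathbb{P}^n}(-\log H)\cong\sO(1)^{\oplus n}$ (from the Euler presentation $T_{\mathbb{P}^n}=\sO(1)^{\oplus(n+1)}/\sO$, the fields tangent to $\{x_0=0\}$ form the subsheaf $x_0\sO\oplus\sO(1)^{\oplus n}$ and the Euler relation cancels the first summand), hence $\Omega^p_{\mathbb{P}^n}(\log H)\cong\sO(-p)^{\oplus\binom{n}{p}}$, whose cohomology vanishes in all degrees for $1\leq p\leq n$ and equals $k$ only for $p=q=0$. That one-line splitting gives (3) immediately and is probably the cleanest thing to record.
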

\begin{lem}\label{lem_proj_bundle}
Let $Z$ be a smooth variety and $\pi:P\rightarrow Z$ be a projective bundle of relative dimension $r$. Let $D\subset P$ be a relative hyperplane. Then for each $i\geq 0$ there is a canonical isomorphism
$$\Omega^i_Z\simeq R\pi_\ast\Omega^i_P(\textrm{log}D)$$
in $D(Z)$.
\end{lem}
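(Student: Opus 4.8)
The plan is to study $\Omega^i_P(\textrm{log}D)$ through the relative logarithmic cotangent sequence and to reduce everything to the fiberwise computation recorded in Lemma \ref{lem_Bott}(3). Since $(P,D)\to Z$ is log smooth (equipping $Z$ with the trivial log structure and regarding $D$ as a relative normal crossing divisor), there is a short exact sequence of locally free sheaves on $P$
$$0\to\pi^\ast\Omega^1_Z\to\Omega^1_P(\textrm{log}D)\to\Omega^1_{P/Z}(\textrm{log}D)\to 0.$$
Taking $i$-th exterior powers endows $\Omega^i_P(\textrm{log}D)$ with a finite decreasing filtration $F^\bullet$ whose graded pieces are
$$\textrm{gr}^j=\pi^\ast\Omega^j_Z\otimes\Omega^{i-j}_{P/Z}(\textrm{log}D),\qquad 0\le j\le i,$$
with top step $F^i=\pi^\ast\Omega^i_Z$ (and $F^{i+1}=0$). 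I will take the canonical morphism of the lemma to be $R\pi_\ast$ applied to the inclusion $F^i=\pi^\ast\Omega^i_Z\hookrightarrow\Omega^i_P(\textrm{log}D)$, precomposed with the canonical isomorphism $\Omega^i_Z\xrightarrow{\sim}R\pi_\ast\pi^\ast\Omega^i_Z$ arising from $R\pi_\ast\sO_P=\sO_Z$ and the projection formula.

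The key input is the direct image of each graded piece. By the projection formula,
$$R\pi_\ast\,\textrm{gr}^j\cong\Omega^j_Z\otimes R\pi_\ast\Omega^{i-j}_{P/Z}(\textrm{log}D).$$
Because $D$ is a relative hyperplane, the restriction of $\Omega^{m}_{P/Z}(\textrm{log}D)$ to any fiber $P_z\cong\mathbb{P}^r$ is $\Omega^m_{\mathbb{P}^r}(\textrm{log}H)$ for a hyperplane $H$. Hence Lemma \ref{lem_Bott}(3) shows that the fiberwise cohomology equals $k$ in degree $0$ when $m=0$ and vanishes identically when $m>0$; in either case its dimension is independent of $z$. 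Cohomology and base change then yields
$$R\pi_\ast\Omega^{m}_{P/Z}(\textrm{log}D)=\begin{cases}\sO_Z,&m=0,\\ 0,&m>0,\end{cases}$$
where the first line is the standard fact $R\pi_\ast\sO_P=\sO_Z$. Consequently $R\pi_\ast\,\textrm{gr}^j=0$ for every $j<i$, whereas $R\pi_\ast\,\textrm{gr}^i=\Omega^i_Z$.

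Finally, applying $R\pi_\ast$ to the short exact sequences attached to $F^\bullet$ and inducting, the vanishing of $R\pi_\ast\,\textrm{gr}^j$ for $j<i$ forces $R\pi_\ast(\Omega^i_P(\textrm{log}D)/F^i)=0$. Thus $R\pi_\ast$ of the inclusion $F^i\hookrightarrow\Omega^i_P(\textrm{log}D)$ is an isomorphism, and composing with $\Omega^i_Z\cong R\pi_\ast\pi^\ast\Omega^i_Z$ produces the asserted canonical isomorphism $\Omega^i_Z\simeq R\pi_\ast\Omega^i_P(\textrm{log}D)$ in $D(Z)$. The one delicate point is the justification of cohomology and base change for the relative log sheaves $\Omega^m_{P/Z}(\textrm{log}D)$: this is precisely where the hypothesis that $D$ is a relative hyperplane does the essential work, guaranteeing both that these sheaves are locally free with base-change-compatible formation and that the fiberwise cohomology dimensions are constant, so that Grauert's criterion applies.
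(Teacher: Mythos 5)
Your proof is correct and follows essentially the same route as the paper: the same Koszul filtration on $\Omega^i_P(\textrm{log}D)$ coming from the relative log cotangent sequence, with the graded pieces $\pi^\ast\Omega^j_Z\otimes\Omega^{i-j}_{P/Z}(\textrm{log}D)$ killed (except for $j=i$) by the Bott-type vanishing of Lemma \ref{lem_Bott}(3) applied fiberwise. The paper packages the final step as the spectral sequence of the filtration rather than your induction on the short exact sequences, but this is only a cosmetic difference; your added care about the definition of the canonical map and the base-change justification is consistent with what the paper leaves implicit.
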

\begin{proof}
The exact sequence
$$0\to \pi^\ast\Omega_Z\to \Omega_P(\textrm{log}D)\to \Omega_{P/Z}(\textrm{log}D)\to0$$
induces a decreasing filtration
$$F^p=\pi^{\ast}\Omega^p_{Z}\wedge\Omega_P^{i-p}(\textrm{log}D)\subset\Omega_P^{i}(\textrm{log}D)$$
such that $$F^p/F^{p+1}\simeq\pi^{\ast}\Omega^p_{Z}\otimes\Omega_{P/Z}^{i-p}(\textrm{log}D).$$
Therefore we have a spectral sequence
$$E_1^{pq}=R^q\pi_{\ast}(\pi^{\ast}\Omega^p_{Z}\otimes\Omega_{P/Z}^{i-p}(\textrm{log}D))\Rightarrow R^{p+q}\pi_\ast(\Omega^{i}_P(\textrm{log}D)).$$
By Lemma \ref{lem_Bott}, we see that
  \begin{align*}
  E_1^{pq}\simeq \Omega^p_{Z}\otimes R^q\pi_{\ast}(\Omega_{P/Z}^{i-p}(\textrm{log}D))=
  \begin{cases}
  \Omega^i_{Z}, & p=i, q=0 \\
  0, & \textrm{otherwise}.
  \end{cases}
  \end{align*}
  This proves the lemma.
\end{proof}

\begin{lem}\label{lem_blowup}
Let $Z_0$ be a smooth projective variety and $Y_0$ be a smooth closed subvariety of $Z_0$. Denote $\pi:X_1\rightarrow Z_0$ be the blowup along $Y_0$ with exceptional divisor $D$. Then for each $k\geq0$, there is a distinguished triangle in $D^b(Z_0)$ induced by natural morphisms:
$$\Omega^k_{Z_0}\stackrel{u}{\rightarrow} R\pi_\ast\Omega_{X_1}^k\oplus\Omega_{Y_0}^k\stackrel{v}{\rightarrow} R\pi_\ast\Omega_D^k\rightarrow\Omega^k_{Z_0}[1].$$
In other words, we have the short exact sequence
\begin{align}\label{align_isomorphism_1}
0\rightarrow\Omega^k_{Z_0}\rightarrow \pi_\ast\Omega_{X_1}^k\oplus\Omega_{Y_0}^k\rightarrow \pi_\ast\Omega_D^k\rightarrow 0
\end{align}
and the isomorphism
\begin{align}\label{align_isomorphism_2}
R^i\pi_\ast\Omega_{X_1}^k\rightarrow R^i\pi_\ast\Omega_D^k
\end{align}
for each $i>0$.
\end{lem}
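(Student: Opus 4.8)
The plan is to realize the triangle as $R\pi_\ast$ of a single restriction sequence on $X_1$, reducing everything to two fibrewise Bott vanishing computations on the exceptional divisor. Write $c=\mathrm{codim}_{Z_0}Y_0\geq 2$, let $i:Y_0\hookrightarrow Z_0$ and $j:D\hookrightarrow X_1$ be the inclusions, and let $p=\pi|_D:D\to Y_0$ be the projective bundle $\mathbb{P}(N_{Y_0/Z_0})\to Y_0$ of relative dimension $c-1$. Since $\pi\circ j=i\circ p$ with $i$ a closed immersion, one has $R\pi_\ast(j_\ast(-))=i_\ast Rp_\ast(-)$, and I read $\Omega^k_{Y_0}$ and $R\pi_\ast\Omega^k_D=i_\ast Rp_\ast\Omega^k_D$ as objects of $D^b(Z_0)$ via $i_\ast$. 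The map $u$ is the pullback $\pi^\ast$ (through adjunction) on the first factor together with the restriction $i^\ast$ on the second, while $v=\bigl(R\pi_\ast(j^\ast),\,-i_\ast p^\ast\bigr)$ is restriction to $D$ minus pullback from $Y_0$. The identity $j^\ast\pi^\ast=p^\ast i^\ast$ gives $v\circ u=0$ and hence a canonical morphism $\bar v:\mathrm{Cone}(u)\to i_\ast Rp_\ast\Omega^k_D$; it then suffices to prove that $\bar v$ is an isomorphism in $D^b(Z_0)$, which I would establish by comparing cohomology sheaves.

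The geometry enters through the restriction of forms to the smooth divisor $D$, which is surjective and yields a short exact sequence on $X_1$,
\[
0\to K^k\to\Omega^k_{X_1}\xrightarrow{\ j^\ast\ }j_\ast\Omega^k_D\to 0,
\]
whose kernel $K^k$ sits, by the ideal-sheaf and conormal sequences, in an extension
\[
0\to\Omega^k_{X_1}(-D)\to K^k\to j_\ast\!\bigl(\Omega^{k-1}_D\otimes N^\vee_{D/X_1}\bigr)\to 0,\qquad N^\vee_{D/X_1}=\sO_D(1).
\]
Pushing the first sequence forward by $R\pi_\ast$, the map $R\pi_\ast(j^\ast)$ is exactly the first component of $v$, so the long exact sequence of direct images shows that $\bar v$ is a quasi-isomorphism as soon as (a) $R^i\pi_\ast K^k=0$ for every $i\geq 1$, which forces the isomorphisms $R^i\pi_\ast\Omega^k_{X_1}\xrightarrow{\sim}i_\ast R^ip_\ast\Omega^k_D$ of (\ref{align_isomorphism_2}); and (b) the degree-zero sequence (\ref{align_isomorphism_1}). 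Part (b) is elementary: once one knows $\pi_\ast\Omega^k_{X_1}=\Omega^k_{Z_0}$ (the standard fact that $k$-forms on the smooth variety $Z_0$ extend across the codimension-$\geq 2$ center) and $\pi_\ast\Omega^k_D=i_\ast\Omega^k_{Y_0}$ (the $R^0$ term of the projective bundle formula), (\ref{align_isomorphism_1}) is just the Mayer--Vietoris sequence $0\to\Omega^k_{Z_0}\xrightarrow{(\mathrm{id},\,i^\ast)}\Omega^k_{Z_0}\oplus i_\ast\Omega^k_{Y_0}\xrightarrow{(i^\ast,\,-\mathrm{id})}i_\ast\Omega^k_{Y_0}\to 0$.

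The crux, and the step I expect to be the main obstacle, is the vanishing in (a). By the two-step filtration of $K^k$ it splits into $R^i\pi_\ast\Omega^k_{X_1}(-D)=0$ and $R^ip_\ast\!\bigl(\Omega^{k-1}_D\otimes\sO_D(1)\bigr)=0$ for $i\geq 1$, and both follow from fibrewise Bott vanishing (Lemma \ref{lem_Bott}(2)) combined with cohomology and base change. Concretely, over a point $y\in Y_0$ the fibre $F\cong\mathbb{P}^{c-1}$ of $\pi$ satisfies $\sO_{X_1}(-D)|_F=\sO_F(1)$, while $N^\vee_{F/X_1}$ carries a filtration with graded pieces $\sO_F(1)$ and $\sO_F^{\oplus(n-c)}$; hence $\Omega^k_{X_1}(-D)|_F$ admits a filtration whose graded pieces all have the shape $\Omega^a_F(t)$ with $t\in\{1,2\}$, each with vanishing higher cohomology by Lemma \ref{lem_Bott}(2). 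The second vanishing is the analogous twisted computation on the relative bundle $D\to Y_0$, exactly in the spirit of Lemma \ref{lem_proj_bundle}. The only real care needed is to track the twists contributed by $\sO_D(1)$ and by the conormal bundle and to confirm that they remain strictly positive in every graded piece, so that Bott vanishing applies uniformly; I do not anticipate any conceptual difficulty beyond this bookkeeping.

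Finally I would assemble the long exact sequence of cohomology sheaves of $\mathrm{Cone}(u)$ and match it against that of $i_\ast Rp_\ast\Omega^k_D$: in degree zero it reproduces (\ref{align_isomorphism_1}), in degrees $\geq 1$ it gives the isomorphisms (\ref{align_isomorphism_2}), and all comparison maps are induced by $\bar v$. Thus $\bar v$ is an isomorphism on every cohomology sheaf, hence a quasi-isomorphism, and the asserted distinguished triangle exists in $D^b(Z_0)$.
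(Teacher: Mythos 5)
Your proof is correct and follows essentially the same route as the paper's: both reduce the triangle to the restriction sequence of $D\subset X_1$ together with the conormal sequence of $D$, and both come down to the same two vanishings, $R^{i}\pi_\ast\bigl(\Omega^k_{X_1}(-D)\bigr)=0$ and $R^{i}p_\ast\bigl(\Omega^{k-1}_D\otimes\sO_D(1)\bigr)=0$ for $i>0$, deduced from Lemma \ref{lem_Bott}(2), plus the elementary identifications $\pi_\ast\Omega^k_{X_1}=\Omega^k_{Z_0}$ and $p_\ast\Omega^k_D=\Omega^k_{Y_0}$ in degree zero. The only substantive difference is how the two vanishings are globalized: the paper filters the relevant sheaves by pullbacks of forms from the base and runs the resulting spectral sequence with the projection formula, whereas you restrict to fibres and invoke base change; for the projective bundle $p:D\to Y_0$ this is fine, but $\pi:X_1\to Z_0$ is not flat, so for $R^{i}\pi_\ast\Omega^k_{X_1}(-D)$ you should appeal to the theorem on formal functions rather than literal cohomology-and-base-change, and check Bott vanishing also on the infinitesimal thickenings of the fibre $F$ (the extra graded pieces are twists by symmetric powers of $N^\vee_{F/X_1}$, which only increase the positive twist, so the same vanishing applies).
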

\begin{proof}
Denote the following automorphism of $\pi_\ast\Omega_{X_1}^k\oplus\Omega_{Y_0}^k$ by $\phi$:
$$
(a,b)\mapsto (a,a-b),
$$
By composing with $\phi$, the exactness of the sequence (\ref{align_isomorphism_1}) is reduced to the following isomorphisms
$$
\Omega^k_{Z_0}\cong \pi_\ast\Omega_{X_1}^k;\quad \Omega_{Y_0}^k\cong \pi_\ast\Omega_D^k.
$$
For $k=0$, these are obvious. For $k\geq 1$, their truth can be easily seen by considering the local model of a blow-up along a smooth center: we assume that $X_1$ is the blow up of $Z_0=\A^n$ along $Y_0=\A^r$ defined by the intersection of some coordinate hyperplanes. Then the map
$$
\pi: D\to Y_0
$$
is the projection
$$
\A^r\times \P^{s}\to \A^r.
$$
Thus, it is trivial to get $\pi_{*}\Omega^k_{D}=\Omega^k_{Y_0}, k\geq 0$ by this description. For the first isomorphism, we use the following estimation:
$$
\pi^*\Omega^k_{Z_0}\subset \Omega^k_{X_1}\subset \pi^*\Omega^k_{Z_0}(kD).
$$
From this, it follows that
$$
\Omega^k_{Z_0}\subset \pi_* \Omega^k_{X_1}\subset \Omega^k_{Z_0}\otimes \pi_*\sO_{X_1}(kD)= \Omega^k_{Z_0},
$$
and hence $\pi_* \Omega^k_{X_1}=\Omega^k_{Z_0}$. \\

The proof of (\ref{align_isomorphism_2}) is divided into two parts. First we show that the natural map
$$R^i\pi_\ast\Omega_{X_1}^k|_{D}\rightarrow R^i\pi_\ast\Omega_D^k$$
is an isomorphism for each $i>0$. Considering the long exact sequence associated to
$$0\rightarrow\sO_D(1)\otimes\Omega^{k-1}_D\rightarrow\Omega^k_{X_1}|_D\rightarrow\Omega^k_D\rightarrow0$$
where $\sO_D(1)$ is the tautological bundle of the projective bundle $D\rightarrow Y_0$, we see that it sufficient to prove that
\begin{align}\label{align_vanish_1}
R^i\pi_\ast(\sO_D(1)\otimes\Omega^{k}_D)=0,\quad i>0.
\end{align}
Notice that the short exact sequence
$$0\rightarrow\pi^{\ast}\Omega_{Y_0}\rightarrow\Omega_D\rightarrow\Omega_{D/Y_0}\rightarrow 0$$
induces a decreasing filtration
$$F^p=\pi^{\ast}\Omega^p_{Y_0}\wedge\Omega_D^{k-p}\subset\Omega_D^{k}$$
such that $$F^p/F^{p+1}\simeq\pi^{\ast}\Omega^p_{Y_0}\otimes\Omega_{D/Y_0}^{k-p}.$$
Therefore we have a spectral sequence
$$E_1^{pq}=R^q\pi_{\ast}(\pi^{\ast}\Omega^p_{Y_0}\otimes\Omega_{D/Y_0}^{k-p}\otimes\sO_D(1))\Rightarrow R^{p+q}\pi_\ast(\sO_D(1)\otimes\Omega^{k}_D).$$
Since $D\rightarrow Y_0$ is a projective bundle, we obtain that
$$E_1^{pq}=\Omega^p_{Y_0}\otimes R^q\pi_{\ast}(\Omega_{D/Y_0}^{k-p}\otimes\sO_D(1))$$
for $p+q\geq 1$ and $p,q\geq 0$, thanks to the Lemma \ref{lem_Bott}. This proves (\ref{align_vanish_1}) and thus
$$R^i\pi_\ast\Omega_{X_1}^k|_{D}\rightarrow R^i\pi_\ast\Omega_D^k$$
is an isomorphism for each $i>0$.\\

Next we show that the canonical morphism
$$R^i\pi_\ast\Omega_{X_1}^k\rightarrow R^i\pi_\ast(\Omega_{X_1}^k|_{D})$$
is an isomorphism for each $i>0$. By the long exact sequence associated to
$$0\rightarrow\Omega_{X_1}^k\otimes\sO_{X_1}(-D)\rightarrow\Omega_{X_1}^k\rightarrow\Omega_{X_1}^k|_{D}\rightarrow 0,$$
we see that it sufficient to show the vanishing
\begin{align}\label{align_vanish2}
R^i\pi_{\ast}(\Omega_{X_1}^k\otimes\sO_{X_1}(-D))=0
\end{align}
for each $i>0$.\\

Notice that the short exact sequence
$$0\rightarrow\pi^{\ast}\Omega_{Z_0}\rightarrow\Omega_{X_1}\rightarrow\Omega_{X_1/Z_0}\simeq\Omega_{D/Y_0}\rightarrow 0$$
induces a decreasing filtration
$$F^p=\pi^{\ast}\Omega^p_{Z_0}\wedge\Omega_{X_1}^{k-p}\subset\Omega_{X_1}^{k}$$
such that $$F^p/F^{p+1}\simeq\pi^{\ast}\Omega^p_{Z_0}\otimes\Omega_{D/Y_0}^{k-p}.$$
Therefore we have a spectral sequence
$$E_1^{pq}=R^q\pi_{\ast}(\pi^{\ast}\Omega^p_{Z_0}\otimes\Omega_{D/Y_0}^{k-p}\otimes\sO_{X_1}(-D))\Rightarrow R^{p+q}\pi_\ast(\Omega^{k}_{X_1}\otimes\sO_{X_1}(-D)).$$
Since $D\rightarrow Y_0$ is a projective bundle, we obtain that
$$E_1^{pq}=\Omega^p_{Z_0}\otimes R^q\pi_{\ast}(\Omega_{D/Y_0}^{k-p}\otimes\sO_D(1))$$
for $p+q\geq 1$ and $p,q\geq 0$, thanks again to the Lemma \ref{lem_Bott}. This proves (\ref{align_vanish2}) and thus
$$R^i\pi_\ast\Omega_{X_1}^k\rightarrow R^i\pi_\ast(\Omega_{X_1}^k|_{D})$$
is an isomorphism for each $i>0$. So we finish the proof of (\ref{align_isomorphism_2}).
\end{proof}
Now we are ready to prove Proposition \ref{prop_KEY}.
\begin{proof}

By Lemma \ref{lem_log_cotangent_normalization} and \ref{lem_proj_bundle}, we have a distinguished triangle
$$R\pi_\ast\bigwedge^i\Omega^{\textrm{log}}_{X}\rightarrow R\pi_\ast\Omega_{X_1}^i(\textrm{log}D)\oplus\Omega_{Y_0}^i\stackrel{R\pi_\ast\varphi}{\rightarrow} R\pi_\ast\Omega_D^{i}\oplus R\pi_\ast\Omega_D^{i-1}\rightarrow R\pi_\ast\Omega_{X^{\textrm{log}}}^i[1]$$
in $D^b(Z_0)$. This triangle fills in the following diagram in $D^b(Z_0)$
\begin{align}\label{align_33}
\xymatrix{
R\pi_\ast\Omega^i_{D} \ar[r] & 0 \ar[r] & R\pi_\ast\Omega_D^i[1] \ar[r]^{\textrm{Id}} & R\pi_\ast\Omega_D^i[1]\\
R\pi_\ast\Omega^{i}_{X_1}\oplus\Omega_{Y_0}^i \ar[u] \ar[r] & R\pi_\ast\Omega_{X_1}^i(\textrm{log}D)\oplus\Omega_{Y_0}^i \ar[u] \ar[r] & R\pi_\ast\Omega_D^{i-1} \ar[u] \ar[r] & R\pi_\ast\Omega^{i}_{X_1}[1]\oplus\Omega_{Y_0}^i[1] \ar[u]\\
R\pi_\ast\wedge^i\Omega^{\textrm{log}}_{X_0} \ar[u]^{p} \ar[r] & R\pi_\ast\Omega_{X_1}^i(\textrm{log}D)\oplus\Omega_{Y_0}^i \ar[u]^{\textrm{Id}} \ar[r]^-{R\pi_\ast\varphi} & R\pi_\ast\Omega_D^{i}\oplus R\pi_\ast\Omega_D^{i-1} \ar[u]^{\textrm{pr}} \ar[r] & R\pi_\ast\wedge^i\Omega^{\textrm{log}}_{X_0}[1] \ar[u]\\
R\pi_\ast\Omega^i_{D}[-1] \ar[r] \ar[u] & 0 \ar[r] \ar[u] & R\pi_\ast\Omega^i_{D} \ar[u] \ar[r]^{\textrm{Id}} & R\pi_\ast\Omega^i_{D}, \ar[u]
}
\end{align}
which is generated from the centered commutative square
$$\xymatrix{
R\pi_\ast\Omega_{X_1}^i(\textrm{log}D)\oplus\Omega_{Y_0}^i \ar[r] & R\pi_\ast\Omega_D^{i-1} \\
R\pi_\ast\Omega_{X_1}^i(\textrm{log}D)\oplus\Omega_{Y_0}^i \ar[u]^{\textrm{Id}} \ar[r]^-{R\pi_\ast\varphi} & R\pi_\ast\Omega_D^{i}\oplus R\pi_\ast\Omega_D^{i-1}. \ar[u]\\
}$$

In the diagram (\ref{align_33}), $p$ is induced (non-canonically) by the above commutative square. The second horizontal line is the direct sum of the distinguished triangles
$$R\pi_\ast\Omega^{i}_{X_1}\rightarrow R\pi_\ast\Omega_{X_1}^i(\textrm{log}D)\rightarrow  R\pi_\ast\Omega_D^{i-1}\rightarrow R\pi_\ast\Omega^{i}_{X_1}[1]$$
and
$$\Omega_{Y_0}^i\stackrel{\textrm{Id}}{\rightarrow} \Omega_{Y_0}^i\rightarrow 0\rightarrow\Omega_{Y_0}^i[1].$$
The horizontal lines of (\ref{align_33}) are distinguished triangles. The second and third vertical lines are also distinguished. By the $3\times3$ lemma of triangulated categories, the first vertical line induces a distinguished triangle
$$R\pi_\ast\wedge^i\Omega^{\textrm{log}}_{X_0}\rightarrow R\pi_\ast\Omega_{X_1}^k\oplus\Omega_{Y_0}^i\rightarrow R\pi_\ast\Omega_D^k\rightarrow R\pi_\ast\wedge^i\Omega^{\textrm{log}}_{X_0}[1].$$
Comparing with Lemma \ref{lem_blowup}, we see that there is a quasi-isomorphim
$$R\pi_\ast\wedge^i\Omega_{X_0}^{\textrm{log}}\simeq\Omega_{Z_0}^i.$$
Note that this isomorphism may not be the natural one induced by the morphism $\pi$. However, we obtain as a consequence of the abstract quasi-isomorhism that
$$R^k\pi_\ast\wedge^i\Omega_{X_0}^{\textrm{log}}\simeq 0,\quad k>0.$$
It remains to show that the natural morphism of sheaves
\begin{align}\label{align_KEY0}
\Omega_{Z_0}^i \rightarrow \pi_\ast\wedge^i\Omega_{X_0}^{\textrm{log}}
\end{align}
is an isomorphism.\\

Let us consider the cohomologies at place 0 of the diagram (\ref{align_33}),
\begin{align}\label{align_330}
\xymatrix{
\pi_\ast\Omega^{i}_{X_1}\oplus\Omega_{Y_0}^i \ar[r] & \pi_\ast\Omega_{X_1}^i(\textrm{log}D)\oplus\Omega_{Y_0}^i \ar[r] & \pi_\ast\Omega_D^{i-1}& \\
\pi_\ast\wedge^i\Omega^{\textrm{log}}_{X_0} \ar[u]^{p^0} \ar[r] & \pi_\ast\Omega_{X_1}^i(\textrm{log}D)\oplus\Omega_{Y_0}^i \ar[u]^{\textrm{Id}} \ar[r]^-{\pi_\ast\varphi} & \pi_\ast\Omega_D^{i}\oplus R\pi_\ast\Omega_D^{i-1} \ar[u]^{\textrm{pr}} \ar[r] & 0\\
0 \ar[u]\ar[r]  & 0 \ar[r] \ar[u] & \pi_\ast\Omega^i_{D} \ar[u] \ar[r]^{\textrm{Id}} & \pi_\ast\Omega^i_{D} \ar[u]
}.
\end{align}
The two vertical sequences in the middle are short exact sequences. Therefore, by the snake lemma, there is an exact sequence
$$0\to \pi_\ast\wedge^i\Omega^{\textrm{log}}_{X_0}\stackrel{p^0}{\to}\pi_\ast\Omega^{i}_{X_1}\oplus\Omega_{Y_0}^i\stackrel{\delta}{\to}\pi_\ast\Omega^i_{D}$$
where $\delta$ is the boundary map which is identical to the one in (\ref{align_isomorphism_1}). Hence by (\ref{align_isomorphism_1}) we see that the natural map (\ref{align_KEY0}) is an isomorphism.
\end{proof}

\textbf{Acknowledgment:} We would like to thank Luc Illusie for several valuable e-mail communications, and Weizhe Zheng for another argument to Theorem \ref{thm_decom_lifting} which is also included in the note. Warm thanks go to Christian Liedtke for his interest and comments. The first named author would like to thank Kang Zuo for his interest and constant support. The second named author would like to express his deep gratitude to Xiaotao Sun and Jun Li for their constant encouragement throughout the work.

\begin{bibdiv}
\begin{biblist}
\bib{Cynk2009}{article}{
   author={Cynk, S{\l}awomir},
   author={van Straten, Duco},
   title={Small resolutions and non-liftable Calabi-Yau threefolds},
   journal={Manuscripta Math.},
   volume={130},
   date={2009},
   number={2},
   pages={233--249},
   issn={0025-2611},
   review={\MR{2545516}},
   doi={10.1007/s00229-009-0293-0},
}
\bib{FKato1996}{article}{
   author={Kato, Fumiharu},
   title={Log smooth deformation theory},
   journal={Tohoku Math. J. (2)},
   volume={48},
   date={1996},
   number={3},
   pages={317--354},
   issn={0040-8735},
   review={\MR{1404507}},
   doi={10.2748/tmj/1178225336},
}
\bib{KKato1988}{article}{
   author={Kato, Kazuya},
   title={Logarithmic structures of Fontaine-Illusie},
   conference={
      title={Algebraic analysis, geometry, and number theory (Baltimore, MD,
      1988)},
   },
   book={
      publisher={Johns Hopkins Univ. Press, Baltimore, MD},
   },
   date={1989},
   pages={191--224},
   review={\MR{1463703 (99b:14020)}},
}
\bib{deJong1996}{article}{
   author={de Jong, A. J.},
   title={Smoothness, semi-stability and alterations},
   journal={Inst. Hautes \'Etudes Sci. Publ. Math.},
   number={83},
   date={1996},
   pages={51--93},
   issn={0073-8301},
   review={\MR{1423020}},
}
\bib{Del_Ill1987}{article}{
   author={Deligne, Pierre},
   author={Illusie, Luc},
   title={Rel\`evements modulo $p^2$ et d\'ecomposition du complexe de de
   Rham},
   language={French},
   journal={Invent. Math.},
   volume={89},
   date={1987},
   number={2},
   pages={247--270},
   issn={0020-9910},
   review={\MR{894379}},
   doi={10.1007/BF01389078},
}
\bib{Illusie2002}{book}{
   author={Bertin, Jos{\'e}},
   author={Demailly, Jean-Pierre},
   author={Illusie, Luc},
   author={Peters, Chris},
   title={Introduction to Hodge theory},
   series={SMF/AMS Texts and Monographs},
   volume={8},
   note={Translated from the 1996 French original by James Lewis and
   Peters},
   publisher={American Mathematical Society, Providence, RI; Soci\'et\'e
   Math\'ematique de France, Paris},
   date={2002},
   pages={x+232},
   isbn={0-8218-2040-0},
   review={\MR{1924513 (2003g:14009)}},
}

\bib{Friedman1983}{article}{
   author={Friedman, Robert},
   title={Global smoothings of varieties with normal crossings},
   journal={Ann. of Math. (2)},
   volume={118},
   date={1983},
   number={1},
   pages={75--114},
   issn={0003-486X},
   review={\MR{707162 (85g:32029)}},
   doi={10.2307/2006955},
}

\bib{Fulton1998}{book}{
   author={Fulton, William},
   title={Intersection theory},
   series={Ergebnisse der Mathematik und ihrer Grenzgebiete. 3. Folge. A
   Series of Modern Surveys in Mathematics [Results in Mathematics and
   Related Areas. 3rd Series. A Series of Modern Surveys in Mathematics]},
   volume={2},
   edition={2},
   publisher={Springer-Verlag, Berlin},
   date={1998},
   pages={xiv+470},
   isbn={3-540-62046-X},
   isbn={0-387-98549-2},
   review={\MR{1644323}},
   doi={10.1007/978-1-4612-1700-8},
}
\bib{Illusie1990}{article}{
   author={Illusie, Luc},
   title={R\'eduction semi-stable et d\'ecomposition de complexes de de Rham
   \`a\ coefficients},
   language={French},
   journal={Duke Math. J.},
   volume={60},
   date={1990},
   number={1},
   pages={139--185},
   issn={0012-7094},
   review={\MR{1047120}},
   doi={10.1215/S0012-7094-90-06005-3},
}

\bib{LM2014}{article}{
	author={Liedtke, Christian},
	author={Satriano, Matthew},
	title={On the birational nature of lifting},
	journal={Adv. Math.},
	volume={254},
	date={2014},
	pages={118--137},
	issn={0001-8708},
	review={\MR{3161094}},
	doi={10.1016/j.aim.2013.10.030},
}

\bib{Lang1995}{article}{
	author={Lang, William E.},
	title={Examples of liftings of surfaces and a problem in de Rham cohomology},
	journal={Compositio Math.},
	volume={97},
	number={1-2}
	date={1995},
	pages={157--160},}
	
\bib{Mukai2013}{article}{
   author={Mukai, Shigeru},
   title={Counterexamples to Kodaira's vanishing and Yau's inequality in
   positive characteristics},
   journal={Kyoto J. Math.},
   volume={53},
   date={2013},
   number={2},
   pages={515--532},
   issn={2156-2261},
   review={\MR{3079312}},
   doi={10.1215/21562261-2081279},
}

\bib{Nakkajima2000}{article}{
   author={Nakkajima, Yukiyoshi},
   title={Liftings of simple normal crossing log $K3$ and log Enriques
   surfaces in mixed characteristics},
   journal={J. Algebraic Geom.},
   volume={9},
   date={2000},
   number={2},
   pages={355--393},
   issn={1056-3911},
   review={\MR{1735777}},
}

\bib{Ogus2014}{book}{
   author={Ogus, Arthur},
   title={Lectures on Logarithmic Algebraic Geometry},
   note={Texed notes},
   date={2014},
}

\bib{Ollson2003}{article}{
	author={Olsson, Martin C.},
	title={Universal log structures on semi-stable varieties},
	journal={Tohoku Math. J. (2)},
	volume={55},
	date={2003},
	number={3},
	pages={397--438},
	issn={0040-8735},
	review={\MR{1993863}},
}

\bib{Raynaud1978}{article}{
   author={Raynaud, M.},
   title={Contre-exemple au ``vanishing theorem'' en caract\'eristique
   $p>0$},
   language={French},
   conference={
      title={C. P. Ramanujam---a tribute},
   },
   book={
      series={Tata Inst. Fund. Res. Studies in Math.},
      volume={8},
      publisher={Springer, Berlin-New York},
   },
   date={1978},
   pages={273--278},
   review={\MR{541027}},
}

\end{biblist}
\end{bibdiv}
\end{document}